\pgfplotsset{
    midpoint segments/.code={\pgfmathsetmacro\midpointsegments{#1}},
    midpoint segments=3,
    midpoint/.style args={#1:#2}{
        ybar interval,
        domain=#1+((#2-#1)/\midpointsegments)/2:#2+((#2-#1)/\midpointsegments)/2,
        samples=\midpointsegments+1,
        x filter/.code=\pgfmathparse{\pgfmathresult-((#2-#1)/\midpointsegments)/2}
    }
}
\pgfplotsset{
    right segments/.code={\pgfmathsetmacro\rightsegments{#1}},  
    right segments=3,
    right/.style args={#1:#2}{
        ybar interval,
        domain=#1+((#2-#1)/\rightsegments):#2+((#2-#1)/\rightsegments),
        samples=\rightsegments+1,
        x filter/.code=\pgfmathparse{\pgfmathresult-((#2-#1)/\rightsegments)}
    }
}
\pgfplotsset{
    left segments/.code={\pgfmathsetmacro\leftsegments{#1}},
    left segments=3,
    left/.style args={#1:#2}{
        ybar interval,
        domain=#1:#2,
        samples=\leftsegments+1,
        x filter/.code=\pgfmathparse{\pgfmathresult}
    }
}
\pgfplotsset{compat=1.14}
\newcommand{\R}{\mathbb{R}}
\newcommand{\bp}{\begin{proof}}
\newcommand{\ep}{\end{proof}}
\newcommand{\beas}{\begin{eqnarray*}}
\newcommand{\eeas}{\end{eqnarray*}}
\newtheorem{thm}{Theorem}[section]
\newtheorem{theorem}[thm]{Theorem}
\newtheorem{lem}[thm]{Lemma}
\newtheorem{prop}[thm]{Proposition}
\newtheorem{conj}[thm]{Conjecture}
\newtheorem{question}[thm]{Question}
\theoremstyle{remark}
\newtheorem{remark}[thm]{Remark}
\theoremstyle{remark}
\newtheorem{example}[thm]{Example}
\theoremstyle{definition}
\newtheorem{defn}[thm]{Definition}
\newtheorem{definition}[thm]{Definition}
\author{
  Brianna Gambacini \\
  Department of Mathematics\\
  North Carolina State University\\
  \texttt{begambac@ncsu.edu} \\
   \And
 R.~Amzi Jeffs \\
  Department of Mathematics\\
  University of Washington\\
  \texttt{rajeffs@uw.edu} \\
   \And
 Sam Macdonald \\
  Department of Mathematics\\
  Willamette University\\
  \texttt{smmacdonald@willamette.edu} \\
   \And
  Anne Shiu \\
  Department of Mathematics\\
  Texas A\&M University \\
   \texttt{annejls@math.tamu.edu} \\
}
\begin{document}
\maketitle

\begin{abstract}
Neural codes are lists of subsets of neurons that fire together.  Of particular interest are neurons called place cells, which fire when an animal is in specific, usually convex regions in space.  A fundamental question, therefore, is to determine which neural codes arise from the regions of some collection of open convex sets or closed convex sets in Euclidean space.  This work focuses on 
how these two classes of codes -- open convex and closed convex codes -- are related.  
As a starting point, open convex codes have a desirable monotonicity property, 
namely, adding non-maximal codewords preserves open convexity; 
but here we show that 
this property fails to hold for closed convex codes.  
Additionally, while adding non-maximal codewords can only increase the open embedding dimension by 1, here we demonstrate that adding a single such codeword can increase the closed embedding dimension by an arbitrarily large amount.
Finally, we disprove a conjecture of Goldrup and Phillipson, and also present an example of a code that is neither open convex nor closed convex.
\end{abstract}

\keywords{Neural code \and Place cell \and Convex \and Simplicial complex}

\section{Introduction}
Place cells are neurons that fire (are active) when an animal is in specific locations~\cite{Oke1}.  The resulting subsets of neurons that fire together, called a neural code, 
can be used by the brain to form a mental map of an animal's environment. Place cells were discovered by John O'Keefe in 1971, earning him a joint
(with May-Britt Moser and Edvard  Moser)
Nobel Prize in Physiology or Medicine in 2014.

The specific location where a place cell fires is called its {\em place field}, and this set 
is typically modeled by a convex set.  Thus, neural codes arising from place cells describe the regions cut out by intersecting convex sets.  This motivates the following question: Which neural codes arise from open convex sets in some Euclidean space?  (Each set is required to be open to account for the fact that place fields are full-dimensional, i.e. they have nonempty interior.)  
Many investigations into this question have been made in recent years (for instance,~\cite{cruz2019open, neural_ring, curto2017makes, giusti2014no, goldrup2014classification, meg_sam, IKR, lienkaemper2017obstructions, williams}).
Recent work such as \cite{jeffs2019embedding} shows that this question strictly generalizes the closely related topic of {\em intersection patterns} of convex sets (see~\cite{tancer-survey} for an overview). 


In this work, we consider the above question, and also, following~\cite{cruz2019open, goldrup2014classification}, the analogous question for closed convex sets.  Additionally, we ask how these two classes of codes -- open convex and closed convex codes -- are related.  Which codes are open convex but not closed convex (or vice-versa)?  Which codes are neither open convex nor closed convex? 

One starting point of our work is a recent ``monotonicity'' result of Cruz {\em et al.}~\cite{cruz2019open}: If two codes $\mathcal C$ and $\mathcal C'$, with  $\mathcal C \subset \mathcal C'$, generate the same simplicial complex, and $\mathcal C$ is open convex, then so is $\mathcal C'$ (see Proposition~\ref{prop:monotone}).  Hence, as open convexity is ``inherited'' from $\mathcal C$ to $\mathcal C'$, 
this result greatly simplifies the analysis of open convex codes.  
However, Cruz {\em et al.}\ 
did not know whether the analogous result holds for closed convexity~\cite{cruz2019open}, and here we show that, somewhat surprisingly, it does {\em not} (Theorem~\ref{thm:notmonotone}).

 
The mononotonicity result of Cruz {\em et al.} mentioned above can be paraphrased as follows:
adding non-maximal codewords to an open convex code yields another open convex code.  
The open convex realization of the larger code
may need to be in a Euclidean space of a higher dimension -- but this dimension need only increase by~1, if at all~\cite{cruz2019open}.
In contrast, we show here that for closed convex codes, this increase, even if finite, can be arbitrarily large (Theorem~\ref{thm:finiteincrease}).

We also disprove a conjecture of Goldrup and Phillipson~\cite{goldrup2014classification} concerning the relationship between open convex and closed convex codes (Theorem~\ref{thm:goldrup-phillipson}).  
Finally, we give the first example of a code on 8 neurons that has no ``local obstructions'' to (open or closed) convexity, but in fact is neither open convex nor closed convex (Theorem~\ref{thm:neither-open-nor-closed}). 

The outline of our work is as follows.  
Section~\ref{sec:background} provides relevant  definitions and prior results. 
In Section~\ref{sec:results}, we prove our main results, and then we end with a discussion in Section~\ref{sec:discussion}.

\section{Background}\label{sec:background}
In this section, we recall the definitions and prior results related to convexity of neural codes (Section~\ref{sec:convex}), simplicial complexes (Section~\ref{sec:mandatory}), 
and sunflowers of convex sets (Section~\ref{sec:sunflower}).

\subsection{Neural codes and convexity} \label{sec:convex}
In what follows, we use the notation
$[n] := \{1,2,...,n\}$.

\begin{definition}
A \textit{neural code on $n$ neurons}
is a set $\mathcal{C} \subset 2^{[n]}$.  Each $\sigma \in \mathcal C$ is a {\em codeword}, and 
$\sigma$ is a \textit{maximal codeword} of $\mathcal{C}$ if it is a maximal element of $\mathcal{C}$ with respect to inclusion.
\end{definition}

For example, the codeword $\sigma = \{1,3,4\}$ indicates that neurons $1$, $3$, and $4$ are active, while all other neurons are silent. For brevity, we will write codewords without brackets or commas; for instance,  $\sigma = 134$. Also,
 when we list the codewords of a code, all maximal codewords will be in boldface. 
\vspace{1mm}

\begin{example} \label{ex:goldrup-phillipson}
The following is a neural code on $6$ neurons, with 12 codewords: 
\begin{align} \label{eq:code-goldrup-p}
\mathcal{C} ~=~ 
    \{\mathbf{123},\mathbf{124},\mathbf{135},\mathbf{236},
    12,13,14,23,24,
    1,2,
    \emptyset\}~. 
\end{align}
\end{example}

The focus of this work is on open convex and closed convex codes (see Definition~\ref{def_clopen_convex} below).  Recall that a set $V \subset \mathbb{R}^d$ is {\em convex} if the line segment joining any two points in $V$ is contained entirely within $V$.  Also, given 
subsets $U_1,U_2,\dots, U_n$ of some $\mathbb{R}^d$ and a nonempty $\sigma\subset [n]$, we use the notation ${U}_\sigma := \bigcap_{i\in\sigma} {U}_i$.

\begin{definition}
Let $\mathcal{U} = \{U_1,U_2,\dots,U_n\}$ be a family of sets 
in a set $X \subset \mathbb{R}^d$ (we call $X$ the {\em stimulus space}).  Then $ {\rm code}(\mathcal{U},X)$ is the code on $n$ neurons given by:
    \begin{equation*} 
    \sigma \in {\rm code}(\mathcal{U},X) ~ \Longleftrightarrow ~ U_\sigma \setminus \bigcup_{j \notin \sigma} U_j \neq \emptyset~,
    \end{equation*}
where $U_{\emptyset} := X$.  
A code $\mathcal{C}$ on $n$ neurons is {\em realized} by a family of sets $\mathcal{U} = \{U_1,U_2,\dots,U_n\}$ 
in a stimulus space $X \subset \mathbb{R}^d$ if $\mathcal{C} = {\rm code}(\mathcal{U},X)$. In this case, $\mathcal U$ is called a \emph{realization} of $\mathcal C$.
\end{definition}

\begin{definition}\label{def_clopen_convex}
A code $\mathcal{C} $ on $n$ neurons 
    is \textit{open convex} (respectively, {\em closed convex}) if 
    there exists a stimulus space $X \subset \mathbb{R}^d$ (for some $d$) and a family of open (respectively, closed) convex sets $\mathcal{U} = \{U_1,U_2,\dots,U_n\}$ 
    such that
     (1) each $U_i$ is a subset of $X$, and (2)
    $\mathcal{C}= {\rm code }(\mathcal{U},X)$.
The minimum such value of $d$ is the \textit{open embedding dimension} (respectively, \textit{closed embedding dimension}) of $\mathcal C$.
\end{definition}

\begin{remark} \label{rmk:stimulus-space}
For the codes in this work, we always take the stimulus space $X$ to be $\mathbb{R}^d$ (cf.~\cite[Remark 2.19]{new-obs}). 
\end{remark}

\begin{figure}[ht] 
    \centering
    \begin{tikzpicture}[scale=0.43]
    \draw[dashed, thick] (-3,0) circle [radius=5];
    \draw[dashed, thick] (3,0) circle [radius=5];
    \draw[dashed, thick] (0,2.15) ellipse (5cm and 1.8cm);
    \draw[dashed, thick] (0,-2.5) ellipse (5cm and 1.5cm);
    \draw[dashed, thick] (2.7,2.5) circle [radius=0.8];
    \draw[dashed, thick] (-2.7,2.5) circle [radius=0.8];
    \node[] at (-6.5,0) {\Large{1}};
    \node[] at (6.5,0) {\Large{2}};
    \node[] at (-3,-2.5) {\Large{14}};
    \node[] at (3,-2.5) {\Large{24}};
    \node[] at (0,-2.5) {\Large{124}};
    \node[] at (0,-0.3) {\Large{12}};
    \node[] at (0,5) {\Large{$\emptyset$}};
    \node[] at (0,2) {\Large{123}};
    \node[] at (-2.7,2.5) {\large{135}};
    \node[] at (2.7,2.5) {\large{236}};
    \node[] at (-4,1.7) {\Large{13}};
    \node[] at (4,1.7) {\Large{23}};
    \end{tikzpicture}
    \caption{Open-convex realization of the code in Example~\ref{ex:goldrup-phillipson}.
    \label{fig:bri_counter_open}}
    \end{figure} 

\begin{remark} \label{rmk:open-dim} 
The open embedding dimension is also called the ``minimal embedding dimension''~\cite{neural_ring}.
\end{remark}
\vspace{1mm}

\begin{example}[Example~\ref{ex:goldrup-phillipson} continued] \label{ex:goldrup-phillipson-again}
Consider again the code $\mathcal C$ in~\eqref{eq:code-goldrup-p}.  
First, $\mathcal{C}$ is open convex: an open-convex realization is shown in Figure~\ref{fig:bri_counter_open} (more precisely, each set $U_i$ is the interior of the union of all closures of regions labeled by some codeword containing $i$).
Also, $\mathcal{C}$ is closed convex.  Indeed, by replacing each $U_i$ in 
Figure~\ref{fig:bri_counter_open} by its closure, we obtain a closed-convex realization of $\mathcal C$.
\end{example}
\vspace{1mm}
%
%
%
%
%


We end this subsection with two more useful definitions.

\begin{definition}\label{def_max_intersection-complete}
A code $\mathcal{C}$ is \textit{max-intersection complete} if 
every intersection of two or more maximal codewords is in $\mathcal{C}$. Otherwise, $\mathcal C$ is \textit{max-intersection incomplete}.
\end{definition}

If a code is max-intersection complete, then it is both open convex and closed convex~\cite{cruz2019open}.  The converse, however, is not true. For instance, the code $\mathcal C$ in~\eqref{eq:code-goldrup-p} is open convex and closed convex (see Example~\ref{ex:goldrup-phillipson-again}), but not max-intersection complete ($135 \cap 236=3$ is not in $\mathcal C$).
\vspace{1mm}

\begin{definition} \label{def:restricted}
Let $\mathcal{C}$ be a code on $n$ neurons, and let $\tau \subset [n]$.  The \textit{code obtained from $\mathcal{C}$ by restricting to $\tau$} is the neural code $\{\sigma \cap \tau \mid \sigma \in \mathcal{C} \}$.
\end{definition}
 Restricting to a set of neurons may be interpreted geometrically: if $\mathcal U = \{U_1,U_2, \ldots, U_n\}$ is a realization of a code $\mathcal C$, then $\{U_i\mid i\in\tau\}$ is a realization of the code obtained from $\mathcal C$ by restricting to $\tau$.

\subsection{Simplicial complexes and mandatory codewords} \label{sec:mandatory}
An (abstract) \textit{simplicial complex} on $[n]$ is a subset of $2^{[n]}$ that is closed under taking subsets. 

\begin{definition}
For a neural code $\mathcal{C}$ on $n$ neurons, the \textit{simplicial complex of} $\mathcal C$ is the smallest simplicial complex containing $\mathcal C$:
$$ \Delta(\mathcal{C}) ~:=~ \{\sigma \subset [n] : \sigma \subset \alpha \text{ for some } \alpha \in \mathcal C \}~.$$
\end{definition}

\begin{example}[Example~\ref{ex:goldrup-phillipson-again} continued] \label{ex:goldrup-phillipson-simplicial-cpx}
The simplicial complex $\Delta(\mathcal{C})$ of the code $\mathcal C$ in~\eqref{eq:code-goldrup-p} has maximal faces $123$, $124$, $135$, and $236$. The geometric realization of $\Delta(\mathcal{C})$ is shown in Figure~\ref{fig:simplicial-cpx}.
\end{example}
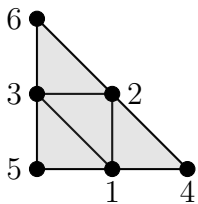
\begin{figure}[htb]
    \centering
\begin{tikzpicture}[scale=1]
    \draw [thick] (0,0) -- (0,2) -- (2,0) -- (0,0);
    \draw [thick] (1,0) -- (1,1) -- (0,1) -- (1,0);
    \draw [thick, fill=gray, opacity=0.2] (0,0) -- (0,2) -- (2,0) -- (0,0);
    \draw[fill=black] (0,0) circle [radius = 0.1] {};
    \draw[fill=black] (0,1) circle [radius = 0.1] {};
    \draw[fill=black] (0,2) circle [radius = 0.1] {};
    \draw[fill=black] (1,0) circle [radius = 0.1] {};
    \draw[fill=black] (1,1) circle [radius = 0.1] {};
    \draw[fill=black] (2,0) circle [radius = 0.1] {};
    \node[black] at (-0.3,0) {$5$};
    \node[black] at (-0.3,1) {$3$};
    \node[black] at (-0.3,2) {$6$};
    \node[black] at (1.3,1) {$2$};
    \node[black] at (1,-0.3) {$1$};
    \node[black] at (2,-0.3) {$4$};
\end{tikzpicture}
    \caption{The simplicial complex of the code in Example~\ref{ex:goldrup-phillipson}.}
    \label{fig:simplicial-cpx}
\end{figure}

The following result, due to Cruz {\em et al.}~\cite{cruz2019open}, states that for codes having the same simplicial complex, open-convexity is a monotone property with respect to inclusion:
\begin{prop}[Monotonicity property for open convex codes~\cite{cruz2019open}] \label{prop:monotone} 
Let $\mathcal C$ and $\mathcal C'$ be codes with $\mathcal{C} \subset \mathcal{C}'$ and $\Delta(\mathcal{C}) = \Delta(\mathcal{C}')$.  If $\mathcal{C}$ is open convex, then $\mathcal{C}'$ is also open convex
and, additionally, 
the open embedding dimension of $\mathcal{C}'$ is at most 1 more than that of $\mathcal{C}$.
\end{prop}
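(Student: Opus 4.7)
The plan is to take an open convex realization of $\mathcal{C}$ in $\mathbb{R}^d$ and lift it to an open convex realization of $\mathcal{C}'$ in $\mathbb{R}^{d+1}$, using the new coordinate to carve out one ``island'' for each codeword in $\mathcal{C}' \setminus \mathcal{C}$.

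I would begin with the following structural observation. Because $\mathcal{C} \subset \mathcal{C}'$ and $\Delta(\mathcal{C}) = \Delta(\mathcal{C}')$, the maximal codewords of $\mathcal{C}$ and $\mathcal{C}'$ coincide (each equals the set of facets of the shared simplicial complex). Consequently, every $\tau \in \mathcal{C}' \setminus \mathcal{C}$ is non-maximal in $\mathcal{C}'$, hence strictly contained in some maximal codeword $\sigma_\tau \in \mathcal{C}$.

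Now fix an open convex realization $\mathcal{U} = \{U_1,\dots,U_n\}$ of $\mathcal{C}$ in $\mathbb{R}^d$. For each $\tau \in \mathcal{C}' \setminus \mathcal{C}$ I would pick a point $p_\tau$ in the nonempty set $U_{\sigma_\tau} \setminus \bigcup_{j \notin \sigma_\tau} U_j$ and a small height $h_\tau > 0$, with the lifted points $q_\tau := (p_\tau, h_\tau) \in \mathbb{R}^{d+1}$ pairwise distinct. The new realization $\mathcal{V} = \{V_1,\dots,V_n\}$ in $\mathbb{R}^{d+1}$ is then defined by
$$V_i \;=\; (U_i \times \mathbb{R}) \;\cap\; \bigcap_{\tau \,:\, i \in \sigma_\tau \setminus \tau} H_{i,\tau},$$
where $H_{i,\tau} \subset \mathbb{R}^{d+1}$ is an open half-space chosen to exclude $q_\tau$ while containing $U_i \times \{0\}$ and every other lifted point $q_{\tau'}$ with $i \in \tau'$. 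Each $V_i$ is then open and convex. By construction the codeword at any point $(x,0)$ is exactly $\{i : x \in U_i\}$, so the slice $\mathbb{R}^d \times \{0\}$ reproduces all of $\mathcal{C}$; and by the separation built into the half-spaces, the codeword at $q_\tau$ is exactly $\tau$. Hence every codeword of $\mathcal{C}'$ is realized.

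The step I expect to be the main obstacle is ruling out spurious codewords in the open slab $\mathbb{R}^d \times (0, \max_\tau h_\tau)$, where the half-space cuts become active. To handle this I would take the $h_\tau$ small and pairwise distinct and tilt the normal directions of the $H_{i,\tau}$ so that each carving is localized to a narrow wedge around its $q_\tau$. A case analysis on what can happen at a point $(x,t)$ of this slab then shows that its codeword either coincides with the original codeword $\{i : x \in U_i\} \in \mathcal{C}$ or equals one of the designated $\tau$'s, and in either case lies in $\mathcal{C}'$. Because only one new coordinate is introduced, the construction gives open embedding dimension of $\mathcal{C}'$ at most one more than that of $\mathcal{C}$.
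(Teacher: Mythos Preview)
The paper does not prove this proposition at all; it is quoted as a result of Cruz \emph{et al.}\ and cited to~\cite{cruz2019open}.  So there is no in-paper argument to compare your sketch against, and I will simply assess your plan on its own.

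Your structural observation is correct: since $\Delta(\mathcal C)=\Delta(\mathcal C')$, every $\tau\in\mathcal C'\setminus\mathcal C$ lies strictly inside a maximal $\sigma_\tau\in\mathcal C$, and your verification that the slice $\mathbb R^d\times\{0\}$ reproduces $\mathcal C$ and that each $q_\tau$ gives codeword exactly $\tau$ is fine.  The difficulty you single out, however, is real and is not closed by the paragraph you wrote.  Two concrete issues:
\begin{itemize}
\item A half-space $H_{i,\tau}$ that contains $U_i\times\{0\}$ and excludes $q_\tau$ removes an \emph{unbounded} region from the cylinder $U_i\times\mathbb R$, not a ``narrow wedge'': its bounding hyperplane meets every level $\mathbb R^d\times\{t\}$ in a full affine hyperplane of $\mathbb R^d$ (or is parallel to it).  No amount of tilting localizes the cut to a neighbourhood of $q_\tau$.
\item At a point $(x,t)$ with $x$ in the atom of some $\sigma\in\mathcal C$, the codeword you obtain is $\{\,i\in\sigma:\ (x,t)\in H_{i,\tau'}\text{ for every }\tau'\text{ with }i\in\sigma_{\tau'}\setminus\tau'\,\}$.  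Since the half-spaces for different $i$ and different $\tau'$ are chosen independently, this set can be an arbitrary subset of $\sigma$ determined by which half-space boundaries have been crossed; nothing in your hypotheses forces it to lie in $\mathcal C'$.  A concrete failure mode: if $\sigma=\{1,2,3\}$ and $\tau=\{1\}$, $\tau'=\{2\}$ are both new with $\sigma_\tau=\sigma_{\tau'}=\sigma$, then where the cut for $H_{1,\tau'}$ and the cut for $H_{2,\tau}$ overlap you may see the codeword $\{3\}$, which need not be in $\mathcal C'$.
\end{itemize}
The standard way to repair this (and essentially what Cruz \emph{et al.}\ do) is to work with bounded $U_i$, use the slab $U_i\times(0,1)$ rather than the full cylinder, and create the new codewords by \emph{growing} rather than cutting: for each $\tau$ pick distinct $p_\tau$ in the atom of $\sigma_\tau$, place $q_\tau$ just above the top face, and for $i\in\tau$ replace $V_i$ by the interior of the convex hull of $\overline{U_i}\times[0,1]$ with the relevant $q_\tau$'s.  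Because $p_\tau\in U_i$ whenever $i\in\tau$, the slab cross-sections are unchanged, the added ``spikes'' over distinct $p_\tau$ are disjoint near the top, and the only new codewords that appear are the intended $\tau$'s.  Your outline is in the right spirit, but as written the half-space-cutting version does not go through without this change.
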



\begin{definition}\label{def_link}
Let $\Delta$ be a simplicial complex on $[n]$ and let $\sigma \in \Delta$. The \textit{link} of $\sigma$ in $\Delta$ is: $$\text{Lk}_\sigma(\Delta) := \{\tau \subset [n] \backslash \sigma : \sigma \cup \tau \in \Delta\}.$$
\end{definition}

Recall that a {\em contractible} set, by definition, is homotopy-equivalent to a single point.


\begin{definition}\label{def_mandatory}
Let $\Delta$ be a simplicial complex.  A nonempty face $\sigma \in \Delta(\mathcal{C})$ is a \textit{mandatory codeword of} $\Delta$ if (the geometric realization of) $\text{Lk}_\sigma(\Delta)$ is non-contractible. Otherwise, $\sigma$ is \textit{non-mandatory}.
\end{definition}

The following definition, pertaining to codes without certain ``local obstructions'' to convexity, is equivalent to the original definition~\cite{curto2017makes}.

\begin{definition}\label{def_local}
A code $\mathcal C$ is {\em locally good} if it contains every mandatory codeword of $\Delta(\mathcal C)$.
\end{definition}

If a code is open convex or closed convex, then it is locally good~\cite{cruz2019open, giusti2014no}.

\subsection{Sunflowers} \label{sec:sunflower}

A \emph{sunflower} is a collection of sets whose pairwise intersections are all equal and nonempty. We will be interested in sunflowers that consist of convex  sets, as introduced in \cite{jeffs2019sunflowers}. 
We define sunflowers using codes as follows.

\begin{definition}\label{def:sunflower}
A collection $\mathcal U = \{U_1, U_2, \ldots, U_n\}$ of convex sets is a \emph{sunflower} if ${\rm code}(\mathcal U, \R^d)$ contains the codeword $[n]$, and all other codewords have size at most one. When $\mathcal U$ is a sunflower, we refer to the $U_i$ as \emph{petals}. 
\end{definition}

A 3-petal sunflower is shown in Figure~\ref{fig:sunflower}.

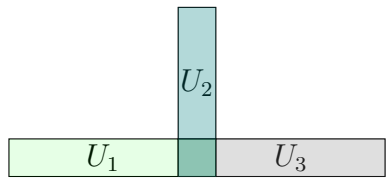
\begin{figure}[ht]
\begin{center}
\begin{tikzpicture}[scale=0.25]
\draw (0,0) to (0,2) to (2,2) to (2,0) to (0,0);
\draw (0,0) to (-9,0) to (-9,2) to (0,2) to (0,0);
\draw (2,0) to (2,2) to (11,2) to (11,0) to (2,0);
\draw (0,2) to (0,9) to (2,9) to (2,2) to (0,2);
\node[black] at (-4,1) { $U_{1}$};
\node[black] at (6,1) { $U_{3}$};
\node[black] at (1,5) { $U_{2}$};
\draw [fill=gray, opacity=0.25] (0,0) rectangle (11,2);
\draw [fill=green, opacity=0.1] (-9,0) rectangle (2,2);
\draw [fill=teal, opacity=0.3] (0,0) rectangle (2,9);
\end{tikzpicture}
\caption{A sunflower $\mathcal U = \{U_1, U_2, U_3\}$, with ${\rm code}(\mathcal U, \R^2) = \{123, 1, 2, 3, \emptyset\} $.} \label{fig:sunflower}
\end{center}
\end{figure}

For our work, we will require the following theorem which constrains how the sets in a sunflower consisting of convex open sets may be arranged.

\begin{theorem}[Sunflower Theorem \cite{jeffs2019sunflowers}]\label{thm:sunflower}
Let $\mathcal U = \{U_1, U_2, \ldots, U_n\}$ be a sunflower of convex open sets in $\R^d$, and assume that $n > d$. Then every hyperplanein $\R^d$ that has nonempty intersection with every $U_i$ also has nonempty intersection with $U_{[n]}$.
\end{theorem}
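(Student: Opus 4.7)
The plan is to split into two cases according to whether the slices $V_i := H \cap U_i$ in the hyperplane have nontrivial pairwise intersections. By the sunflower property every codeword of size at least two equals $[n]$, so for $i \neq j$ we have $U_i \cap U_j = U_{[n]}$ and hence $V_i \cap V_j = H \cap U_{[n]}$. Thus if some $V_i \cap V_j$ with $i \neq j$ is nonempty, then $H \cap U_{[n]}$ is nonempty and we are done. So the only case to handle is when the $V_i$ are pairwise disjoint nonempty open convex subsets of $H \cong \R^{d-1}$ and $H \cap U_{[n]} = \emptyset$.

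In this remaining case, since $U_{[n]}$ is a nonempty convex open set disjoint from $H$, it lies strictly in one of the two open half-spaces determined by $H$; call this the positive side. Each $U_i$ is open and meets $H$, so by openness it also meets the negative side, yielding a nonempty open convex set $U_i^- := U_i \cap \{\text{negative side}\}$. A short check using the sunflower property shows that the $U_i^-$ are pairwise disjoint: a point in $U_i^- \cap U_j^-$ would lie in $U_i \cap U_j = U_{[n]}$ on the positive side, yet also on the negative side, a contradiction.

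To leverage the hypothesis $n > d$, I would invoke Radon's theorem in $\R^d$. Fix $p \in U_{[n]}$ and, for $i = 1, \ldots, d+1$, pick $q_i \in U_i^-$. Radon's theorem applied to the $d+2$ points $p, q_1, \ldots, q_{d+1}$ produces a partition into two subsets with intersecting convex hulls, meeting at some point $w$. The point $p$ cannot be isolated on its side, because the convex hull of any subset of the $q_i$'s lies strictly on the negative side of $H$ while $p$ is strictly on the positive side; so the partition has the form $\{p\} \cup \{q_i : i \in A\}$ versus $\{q_j : j \in B\}$ with $A$ and $B$ disjoint nonempty subsets of $\{1, \ldots, d+1\}$, yielding $w = \alpha p + \sum_{i \in A} \alpha_i q_i = \sum_{j \in B} \beta_j q_j$ for some convex coefficients.

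The main obstacle, and the step requiring the most care, is extracting a contradiction from this Radon witness $w$ via the sunflower relation $U_k \cap U_\ell = U_{[n]}$ for $k \neq \ell$. The idea is to show that $w$ lies in two distinct petals simultaneously, which would force $w \in U_{[n]}$ and contradict the fact that $w$ lies strictly on the negative side. Since a convex combination of points drawn from different petals need not lie in any one petal, this step requires a careful argument, and I would attempt to perturb $p$ within $U_{[n]}$ and the $q_i$ within the open sets $U_i^-$ so that $w$ lies in the strict interior of a convex hull of points sitting inside a common petal, letting us conclude $w \in U_k$ for two different $k$. Should this direct route stall, a natural alternative is to project centrally from $p$ and attempt an induction on $d$, although one must verify that a sunflower-like structure is preserved under projection.
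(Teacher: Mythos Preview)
The paper does not supply a proof of this statement: Theorem~\ref{thm:sunflower} is quoted from \cite{jeffs2019sunflowers} and then used as a black box in the proofs of Lemmas~\ref{lem:positivecodimension} and~\ref{lem:Adcodimension}. So there is no proof in the paper to compare your attempt against, and I can only assess the argument on its own merits.

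Your reduction is sound up through the Radon step: if some $V_i\cap V_j\neq\emptyset$ you are done, and otherwise the $V_i$ are pairwise disjoint, $U_{[n]}$ lies strictly on one side of $H$, the pieces $U_i^-$ are nonempty and pairwise disjoint, and applying Radon to $p,q_1,\ldots,q_{d+1}$ while ruling out the partition with $\{p\}$ alone is correct. The gap is exactly where you flag it, and it is genuine rather than a routine detail. From $w=\alpha p+\sum_{i\in A}\alpha_iq_i=\sum_{j\in B}\beta_jq_j$ you want $w$ to lie in two distinct petals, but neither expression delivers this. The $B$-side places $w$ in the convex hull of the $q_j$ with $j\in B$, and that simplex is not contained in any single $U_j$ when $|B|\ge 2$, since its vertices sit in the pairwise disjoint sets $U_j^-$. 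The $A$-side places $w$ in $U_k$ only in the special case $A=\{k\}$ (because then $p,q_k\in U_k$); for $|A|\ge 2$ it gives nothing. Already for $d=2$, where $|A|+|B|=3$, you obtain $w$ in at most one petal, which is no contradiction, and for $d\ge 3$ you may get $w$ in no petal at all. Perturbing $p$ and the $q_i$ does not help: the obstruction is structural rather than a genericity issue, since a convex combination of points taken from pairwise disjoint convex sets need not lie in any of them, however the points are jiggled. Your fallback of projecting centrally from $p$ also runs into trouble, because every $U_i$ contains an open neighborhood of $p$, so its central projection onto $H$ (or onto a small sphere about $p$) is essentially everything, and no sunflower structure survives. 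As written, the argument does not close.
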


\section{Results} \label{sec:results}
Our main results are as follows.
First, closed convex codes do {\em not} possess the same monotonicity property that open convex codes have (Theorem~\ref{thm:notmonotone}).  
Next, adding non-maximal codewords can increase the embedding dimension of closed convex codes by arbitrarily large, finite amounts (Theorem~\ref{thm:finiteincrease}).
We also disprove a conjecture on the relationship between open convexity and closed convexity (Theorem~\ref{thm:goldrup-phillipson}).  
Finally, we give an example of code on 8 neurons that is locally good, but neither open convex nor closed convex (Theorem~\ref{thm:neither-open-nor-closed}), and then conjecture that there are no such codes on fewer neurons.

\subsection{Closed convexity is non-monotone} \label{sec:monotone}
Recall that open convex codes have a monontonicity property (Proposition~\ref{prop:monotone}).  It is natural to ask whether the same is true for closed convexity (indeed, Cruz {\em et al.} did not know the answer~\cite[\S 3]{cruz2019open}):
\begin{question} \label{q:closed-monotone}
Let $\mathcal C$ and $\mathcal C'$ be codes with $\mathcal{C} \subset \mathcal{C}'$ and $\Delta(\mathcal{C}) = \Delta(\mathcal{C}')$.  
\begin{itemize}
    \item [(a)] If $\mathcal{C}$ is closed convex, does it follow that $\mathcal{C}'$ is also closed convex?
    \item [(b)] If $\mathcal{C}$ and $\mathcal{C}'$ are closed convex, does it follow that the closed embedding dimension of $\mathcal{C}'$ is at most 1 more than that of $\mathcal{C}$?
\end{itemize}
\end{question}

In a special case, Question~\ref{q:closed-monotone}(a) has an affirmative answer.
Specifically, this is true for closed convex codes that have a realization in which the region of each codeword is top-dimensional (including max-intersection-complete codes); this result
follows from results of Cruz {\em et al.}~\cite[Theorem~1.3, Lemma~2.11, and Theorem 2.12]{cruz2019open}.
In general, however, Question~\ref{q:closed-monotone}(a) 
and~(b)  
have a negative answer.  We show this perhaps surprising result
 in the following theorem and Theorem~\ref{thm:finiteincrease}. 

\begin{thm}[Closed convexity is non-monotone]\label{thm:notmonotone}
Consider the code 
\[\mathcal C = 
    \{
     \mathbf{12378}, 
    \mathbf{1457}, \mathbf{2456}, \mathbf{3468},
    17, 38, 45, 46, 
    2, 
    \emptyset
    \}.\] 
This code has a closed convex realization in $\R^2$, but $\mathcal C\cup \{278\}$ is not closed convex (in any dimension).
\end{thm}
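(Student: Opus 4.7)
My plan splits along the two assertions.

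For the first claim (closed convex realization of $\mathcal C$ in $\R^2$), I would first make a key observation: in $\mathcal C$, every codeword containing $1$ also contains $7$ (and vice versa), and similarly for $3$ and $8$. This forces $V_1 = V_7$ and $V_3 = V_8$ in any realization, reducing the problem to arranging six distinct closed convex sets. I would then give an explicit construction mimicking the style of Figure~\ref{fig:bri_counter_open}: a central region hosting atom $12378$, three peripheral "petal" regions for the maximal codewords $1457, 2456, 3468$, and appropriate overlaps producing the atoms $17, 38, 45, 46, 2$. The sets $V_4, V_5, V_6$ would be drawn as convex hulls spanning appropriate collections of petals, and one verifies the ten atomic regions are nonempty and all forbidden ones empty.

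For the second claim ($\mathcal C \cup \{278\}$ is not closed convex in any dimension), I would argue by contradiction. Suppose $\{V_1, \ldots, V_8\}$ is a closed convex realization in some $\R^d$. The starting point is to extract forced structural identities. Restricting the code to $\{1,2,3\}$ yields the sunflower code $\{123,1,2,3,\emptyset\}$, and restricting to $\{2,7,8\}$ yields $\{278,2,7,8,\emptyset\}$; so
\[V_1 \cap V_2 = V_1 \cap V_3 = V_2 \cap V_3 =: K_1 \qquad \text{and} \qquad V_2 \cap V_7 = V_2 \cap V_8 = V_7 \cap V_8 =: K_2.\]
Further forbidden-codeword analysis gives additional equalities: since the only codeword in $\mathcal C \cup \{278\}$ containing $\{1,8\}$ (respectively $\{3,7\}$) is $12378$, we obtain $V_1 \cap V_8 = V_3 \cap V_7 = K_1$. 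Also, $K_1 \subsetneq K_2$ with $A_{12378} = K_1$ and $A_{278} = K_2 \setminus K_1$.

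The next step is to derive a contradiction by tracking codewords along well-chosen line segments between atomic regions. For example, pick $s$ in the atom for $17$ and $q$ in the atom for $278$; both lie in the convex set $V_7$, so the segment $sq$ lies in $V_7$. The codeword at each point of $sq$ must be a codeword in $\mathcal C \cup \{278\}$ containing $7$, hence one of $12378, 1457, 17, 278$. Analyzing which transitions are possible (e.g., one cannot be in $V_1$ but outside $V_3$ while in $V_2 \cap V_7 \cap V_8$, since there is no codeword containing $\{1,2,7,8\}$ but not $3$) forces several entries and exits to coincide: in particular the segment must enter $V_2, V_3, V_8$ at one instant and exit $V_1, V_3$ at another. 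Repeating this analysis on segments between other pairs of atomic regions (such as atom $1457$ to atom $278$, or atom $17$ to atom $38$) piles on further coincidences. Combining these with the sunflower identities and the convexity of $V_4, V_5, V_6$ ultimately yields an impossible configuration.

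The main obstacle is pinpointing the precise combinatorial-geometric incompatibility in the final step. I expect the cleanest contradiction to come from considering a triangle whose vertices lie in three carefully selected atomic regions (e.g., one vertex in atom $12378 \subset K_1$, one in atom $278 \subset K_2 \setminus K_1$, and one in atom $17$ outside $K_2$), applying the identity $V_1 \cap V_8 = K_1$ together with $K_1 \subsetneq K_2$ on the three edges, and showing that no assignment of closed convex subsets to the edges is consistent with all the codeword constraints.
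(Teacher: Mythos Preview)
Your treatment of the first claim is correct and essentially the same as the paper's: both observe that neurons $7$ and $8$ duplicate $1$ and $3$ and then exhibit an explicit planar realization of the resulting six-set code.

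For the second claim, however, there is a genuine gap. Your structural identities ($K_1 = V_{12378}$, $K_2 = V_{278}$, $K_1\subsetneq K_2$, and the various pairwise intersections equalling $K_1$) are correct, and segment-tracking is indeed the right tool at one stage. But the specific endgame you propose cannot work: the triangle with vertices in the atoms for $12378$, $278$, and $17$ involves only the neurons $\{1,2,3,7,8\}$, and the restriction of $\mathcal C\cup\{278\}$ to those five neurons is $\{12378,278,17,38,2,\emptyset\}$, which \emph{is} closed convex. So no contradiction can emerge from that triangle alone, and more generally no contradiction can emerge without making essential use of $V_4$ (and the atoms $1457$, $2456$, $3468$), which your concrete sketch does not do.

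The deeper missing ingredient is that the obstruction is \emph{dimensional}: the restriction $\mathcal C_0$ of $\mathcal C$ to $\{1,\ldots,6\}$ is itself closed convex in $\R^2$, so there is no purely combinatorial incompatibility to find. What the paper does is pick points $p_1,p_2,p_3$ in the atoms for $1457$, $278$, $3468$ respectively, pass to the $2$-plane $A$ they span, and use segment-tracking (much as you outline) to show that the $W_i := V_i\cap A$ still realize $\mathcal C\cup\{278\}$ and, crucially, that $W_{123}$ is \emph{full-dimensional} in $A$. Deleting $W_7,W_8$ then gives a planar closed realization of $\mathcal C_0$ with $W_{123}$ full-dimensional. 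This is ruled out by a separate lemma: if $V_{123}$ were full-dimensional, then the interiors $U_1,U_2,U_3$ would form a sunflower of three open sets in $\R^2$, and a line separating $V_{123}$ from $V_4$ would meet each $U_i$ but miss their common intersection, contradicting the Sunflower Theorem (Theorem~\ref{thm:sunflower}). Your outline has neither the reduction to a $2$-dimensional slice nor the invocation of the Sunflower Theorem, and without something playing the role of that dimensional bound I do not see how your argument closes.
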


We first require a lemma regarding a closely related code, $\mathcal{C}_0$, which is (up to permutation of neurons) the minimally non-open-convex code of~\cite[Theorem 5.10]{jeffs2019morphisms} (see also~\cite[Theorem 4.2]{jeffs2019sunflowers}). 
A convex set $Y\subset \R^d$ is \emph{full-dimensional} if its affine hull is $\R^d$. Note that for a convex set, being full-dimensional is equivalent to being top-dimensional. Moreover, a convex set is full-dimensional if and only if it has nonempty interior. 

\begin{lem}\label{lem:positivecodimension}
The code $\mathcal C_0 = 
\{  \mathbf{2456},
    \mathbf{123},\mathbf{145},\mathbf{346},
     45, 46, 
    1,2,3,
    \emptyset
    \}$ 
is closed convex in $\R^2$, and every closed convex realization $\{V_1, V_2, \ldots, V_6\}$ in $\R^2$ 
is such that
$V_{123}$ is not full-dimensional.
\end{lem}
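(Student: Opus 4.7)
I treat the two halves of the lemma separately.

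For the realization claim, I exhibit closed convex sets in $\R^2$ directly. Let $V_4=\mathrm{conv}\{(-2,0),(2,0),(0,1)\}$, split by its altitude from $(0,1)$ into $V_5=\mathrm{conv}\{(-2,0),(0,0),(0,1)\}$ and $V_6=\mathrm{conv}\{(0,0),(2,0),(0,1)\}$, so that $V_4=V_5\cup V_6$ and $V_5\cap V_6=\{0\}\times[0,1]$. Let $V_2=\{0\}\times[-1,2]$, let $V_1$ be the segment from $(-1,\tfrac12)$ to $(0,2)$, and let $V_3$ be the segment from $(1,\tfrac12)$ to $(0,2)$. A direct check shows this realizes all ten codewords of $\mathcal C_0$; in particular $V_{123}=\{(0,2)\}$ is a single point, $V_{145}=\{(-1,\tfrac12)\}$, $V_{346}=\{(1,\tfrac12)\}$, and $V_{2456}=\{0\}\times[0,1]$.

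For the dimension claim, suppose for contradiction that $\{V_1,\dots,V_6\}$ is a closed convex realization in $\R^2$ with $V_{123}$ of full dimension. From the codewords that appear and do not appear in $\mathcal C_0$ I extract the following structural properties: restricting to $\{4,5,6\}$ gives $V_4=V_5\cup V_6$ (since $\{4\},\{5\},\{6\}$ are absent); since $\{5,6\}$ appears only in $2456$, we get $V_5\cap V_6=V_{2456}\subseteq V_2$; the absence of codewords containing $\{1,6\}$ or $\{3,5\}$ forces $V_1\cap V_6=V_3\cap V_5=\emptyset$; and restricting to $\{1,2,3\}$ yields the 3-petal sunflower code $\{123,1,2,3,\emptyset\}$, so $\{V_1,V_2,V_3\}$ is a closed convex sunflower with core $V_1\cap V_2=V_1\cap V_3=V_2\cap V_3=V_{123}$.

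Since $V_{123}\subseteq V_i$ has nonempty interior for each $i\in\{1,2,3\}$, each $V_i$ has nonempty interior, and the open convex sets $U_i:=\mathrm{int}(V_i)$ form an open convex sunflower in $\R^2$ with core $U_{123}=\mathrm{int}(V_{123})$: indeed $U_i\cap U_j=\mathrm{int}(V_{123})$ for distinct $i,j$, and each $U_i$ inherits a pure region from the codeword $\{i\}$ of $\mathcal C_0$ via a standard convexity argument on $V_i\setminus(V_j\cup V_k)$ and $\mathrm{int}(V_i)$. By the Sunflower Theorem (Theorem~\ref{thm:sunflower}, with $n=3>2=d$), every line meeting each $U_i$ must meet $U_{123}$. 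I derive a contradiction by producing a line $\ell$ meeting each $U_i$ but avoiding $U_{123}$. Picking $a\in V_{145}$ and $b\in V_{346}$, the segment $[a,b]$ lies in $V_4$ (convex) and, going from $V_5$ to $V_6$, must pass through $V_5\cap V_6\subseteq V_2$, so the line $\ell$ through $a,b$ meets $V_1,V_2,V_3$. Now $U_{123}\cap V_4=\emptyset$ while $[a,b]\subseteq V_4$, so any point $c\in\ell\cap U_{123}$ must lie on the extension of $[a,b]$ past $a$ or past $b$; in the former case, convexity of $V_3\ni c,b$ forces $a\in[c,b]\subseteq V_3$, contradicting $a\in V_{145}$, and symmetrically in the latter case $b\in V_1$ contradicts $b\in V_{346}$. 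Hence $\ell\cap U_{123}=\emptyset$, contradicting the Sunflower Theorem.

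The main technical obstacle is verifying that $\ell$ meets each open interior $U_i$ rather than only the closed set $V_i$, since a priori $\ell$ could be a supporting line of $V_1$ or $V_3$ at $a$ or $b$. I plan to handle this by choosing $a,b$ generically within their codeword regions, or more robustly by replacing $\ell$ with a hyperplane strictly separating the disjoint convex sets $\mathrm{int}(V_{123})$ and $V_4$ (one open, one closed); each $V_i$ then straddles the separator, containing $\mathrm{int}(V_{123})$ on one side and meeting $V_4$ on the other, so the separator meets each $\mathrm{int}(V_i)$ while strictly missing $\mathrm{int}(V_{123})$, again contradicting the Sunflower Theorem.
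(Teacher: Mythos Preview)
Your approach is correct and, after your self-correction to option (b), essentially identical to the paper's: pass to interiors to get an open sunflower $\{U_1,U_2,U_3\}$, separate $V_{123}$ from $V_4$ by a line, observe that each $V_i$ straddles the line (via $V_{123}$ on one side and the codewords $145$, $2456$, $346$ placing points of $V_i\cap V_4$ on the other), and contradict the Sunflower Theorem. One technical point worth tightening: the paper first intersects every $V_i$ with a large closed ball so that the disjoint closed convex sets $V_{123}$ and $V_4$ become compact and can be \emph{strictly} separated; without strictness on the $V_4$ side, your witness in $V_i\cap V_4$ could sit exactly on the separating line, and then the half-open segment from a point of $\mathrm{int}(V_{123})$ to that witness lies entirely on the $V_{123}$ side and never meets the line inside $\mathrm{int}(V_i)$. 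Your option (a) of choosing $a,b$ generically is not viable, since the codeword regions for $145$ and $346$ can be single points (as in your own explicit realization), so stick with the separating-line argument plus compactification.
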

\begin{proof}
A closed convex realization of $\mathcal C_0$ in $\R^2$ is shown in Figure \ref{fig:C0realization}.

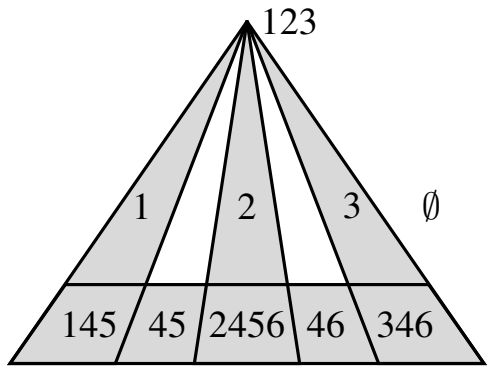
\begin{figure}[ht] 
    \centering
    \begin{tikzpicture}[scale=0.35]
    \draw[fill=black!15] (0,7)--(-9,-6)--(-5,-6)--(0,7);
    \draw[fill=black!15] (0,7)--(-2,-6)--(2,-6)--(0,7);
    \draw[fill=black!15] (0,7)--(9,-6)--(5,-6)--(0,7);
    \draw[fill=black!15] (-9,-6)--(9,-6)--(6.9,-3)--(-6.9,-3);
    \draw[very thick, line cap = round] (0,7)--(-9,-6)--(-5,-6)--(0,7);
    \draw[very thick, line cap = round] (0,7)--(-2,-6)--(2,-6)--(0,7);
    \draw[very thick, line cap = round] (0,7)--(9,-6)--(5,-6)--(0,7);
    \draw[very thick] (-9,-6)--(9,-6)--(6.9,-3)--(-6.9,-3)--cycle;
 
    \node[] at (7,0) {\Large{$\emptyset$}};
    \node[] at (-6,-4.5) {\Large{145}};
    \node[] at (-3,-4.5) {\Large{45}};
    \node[] at (0,-4.5) {\Large{2456}};
    \node[] at (3,-4.5) {\Large{46}};
    \node[] at (6,-4.5) {\Large{346}};
    \node[] at (-4, 0) {\Large{1}};
    \node[] at (0,0) {\Large{2}};
    \node[] at (4,0) {\Large{3}};
    \node[] at (1.6, 7) {\Large{123}};
    \end{tikzpicture}
    \caption{A closed convex realization of $\mathcal C_0$ in $\R^2$.}
    \label{fig:C0realization}
    \end{figure} 

To prove the rest of the lemma, let $\{V_1,V_2, \ldots, V_6\}$ be a closed convex realization of $\mathcal C_0$ in $\R^2$. 
By intersecting each of the $V_i$'s by a single sufficiently large closed ball, we may assume that each $V_i$ is compact (cf.~\cite[Remark 2.19]{new-obs}).
We will show that $V_{123}$ is not full-dimensional. 
 Below, we let $U_i$ denote the interior of $V_i$ (for $1 \leq i \leq 6$).  

Suppose for contradiction that $V_{123}$ is full-dimensional.  
Then $V_1$, $V_2$, and $V_3$ are full-dimensional.  
We claim that $\{U_1,U_2,U_3 \}$ forms a sunflower.  Indeed, $123$ is the only codeword containing more than one neuron from the set $\{1,2,3\}$, so $V_i\cap V_j = V_{123} \neq \emptyset$ for $1 \leq i < j \leq 3$.  
The same relationship holds for $\{U_1, U_2, U_3\}$ because 
$V_{123}$ is full-dimensional (and so has nonempty interior)
and 
the intersection of the interiors of two sets is the interior of their intersection. 

Next, no codeword contains $1234$, so $V_4$ is disjoint from $V_{123}$.  It follows that there exists a line $L$ properly separating the two (compact and convex) sets.  We now claim that $L$ intersects $U_i$, for $1 \leq i \leq 3$.  This claim follows from the fact that one side of $L$ properly contains a point from $V_{123}$ and 
the other side properly contains a point from the region corresponding to the codeword $145$ (or, respectively, $2456$ or $346$) which is contained in $V_4$.

In summary, $\{U_1,U_2,U_3 \}$ forms a sunflower of open, convex sets in $\mathbb{R}^2$; and the line 
$L$ passes through $U_1, U_2, U_3$, but does not pass through their intersection (as this intersection is contained in $V_{123}$).  These facts directly contradict Theorem~\ref{thm:sunflower}, and so $V_{123}$ is not full-dimensional.
%
%
%
\end{proof}

\begin{proof}[Proof of Theorem~\ref{thm:notmonotone}]
Notice that $\mathcal C$ is the same as  $\mathcal C_0$ from Lemma~\ref{lem:positivecodimension}, except that we have added neurons 7 and 8 which duplicate neurons 1 and 3, respectively. Thus the realization of $\mathcal C_0$ given in Figure~\ref{fig:C0realization} provides a closed realization of $\mathcal C$ in $\R^2$ by setting $V_7 = V_1$ and $V_8 = V_3$. Also, $\mathcal C_0$ is the restriction of $\mathcal C$ to the neurons $\{1,2,\ldots, 6\}$.

Now suppose for contradiction that there is a closed convex realization $\{V_1, V_2, \ldots, V_8\}$ of $\mathcal C \cup \{278\}$ in $\R^d$. It is straightforward to check that $d=1$ is impossible.
So, assume that $d\ge 2$. 

Let $p_1\in V_{1457}$, $p_2 \in V_{278}\setminus V_{12378}$, and $p_3 \in V_{3468}$ (so, $p_i \in V_i$ and the three points are distinct). 
Let $A$ be a 2-dimensional affine subspace of $\R^d$ containing $p_1, p_2,$ and $p_3$; and let $W_i = V_i\cap A$. We claim that $\{W_1,W_2,\ldots, W_8\}$ is a realization of $\mathcal C\cup \{278\}$ in $A$ (i.e. in $\R^2$), and moreover that $W_{123}$ is full-dimensional in this realization.

Clearly the code of $\{W_1, W_2, \ldots, W_8\}$ is contained in $\mathcal C\cup \{278\}$ since the $V_i$'s realize that code. So, we must show that every codeword from $\mathcal C\cup\{278\}$ arises inside $A$. By choice of $p_1, p_2,$ and $p_3$, $A$ contains points that realize the codewords $1457$, $3468$, and $278$. 

Consider the line segment $L_1$ from $p_2$ to $p_3$. This line segment is contained entirely in $W_8$, and so the codewords that appear along it must come from the set $\{278, 12378, 38, 3468\}$. In fact, each of these codewords must appear, and in exactly this order, since the code along the line segment must be a 1-dimensional code (see the arguments in~\cite{zvi-yan}). A symmetric argument shows that the line segment $L_2$ from $p_2$ to $p_1$ has the codewords $\{278, 12378, 17, 1457\}$ along it in that order. Finally, a similar argument shows that the line segment $L_3$ from $p_1$ to $p_3$ has along it the codewords $\{1457, 45, 2456, 46, 3468\}$ in that order.

Thus, only the codewords 
$2$ and $\emptyset$ need to be shown to arise in $A$. The codeword 2 can be recovered by examining a line segment from $p_2$ to a point in $W_{2456}$, and $\emptyset$ can be obtained by assuming that the $W_i$ are bounded (cf.\ \cite[Remark 2.19]{new-obs}).

To see that $W_{123}$ is full-dimensional in $A$, we again consider the line segments $L_1$, $L_2$, and $L_3$. The points $p_1, p_2,$ and $p_3$ must be in general position: the codeword $1457$ that $p_1$ gives rise to does not appear on the line segment $L_1$ between $p_2$ and $p_3$, the codeword $3468$ corresponding to $p_3$ does not appear on $L_2$, and the codeword $278$ corresponding to $p_2$ does not appear on $L_3$. Thus 
$p_1,p_2,p_3$ define a triangle in $\mathbb{R}^2$ with edges $L_1,L_2,L_3$; see Figure~\ref{fig:codewordtriangle}.
\begin{figure}[ht] 
    \label{fig:triangle}
    \centering
    \begin{tikzpicture}[scale=0.35]
    \draw[very thick, line cap = round] (0,7)--(-9,-6)--(9,-6)--(0,7)--cycle;    \draw[fill = red!50, opacity=0.4] (2.8, 3)--( -2.8, 3)--(9,-6)--(2.8,3)--cycle;
    \draw[fill = black!50, opacity=0.4] (2.8, 3)--( -2.8, 3)--(-9,-6)--(2.8,3)--cycle;
    \draw[fill=black] (2.8, 3) circle [radius = 0.15] {};
    \draw[fill=black] (-2.8, 3) circle [radius = 0.15] {};
    \draw[fill=black] (0,7) circle [radius = 0.15] {};
    \draw[fill=black] (-9,-6) circle [radius = 0.15] {};
    \draw[fill=black] (9,-6) circle [radius = 0.15] {};
    \node[] at (10,-6) {\Large{$p_3$}};
    \node[] at (-10,-6) {\Large{$p_1$}};
    \node[] at (0, 8) {\Large{$p_2$}};
    \node[] at (2.5, 6) {\Large{$278$}};
    \node[] at (5, 3) {\Large{$12378$}};
    \node[] at (6.5, 0) {\Large{$38$}};
    \node[] at (9, -3) {\Large{$3468$}};
    
    \node[] at (-2.5, 6) {\Large{$278$}};
    \node[] at (-5, 3) {\Large{$12378$}};
    \node[] at (-6.5, 0) {\Large{$17$}};
    \node[] at (-9, -3) {\Large{$1457$}};
    
    \node[] at (-7, -7) {\Large{$1457$}};
    \node[] at (-3.5, -7) {\Large{$45$}};
    \node[] at (0,-7) {\Large{$2456$}};
    \node[] at (3.5,-7) {\Large{$46$}};
    \node[] at (7, -7) {\Large{$3468$}};
    
    \node[] at (10, 3) {\Large{$L_1$}};
    \node[] at (-10, 3) {\Large{$L_2$}};
    \node[] at (0, -9.5) {\Large{$L_3$}};

    \node[] at (-1.5, 3.6) {\Large{$q_1$}};
    \node[] at (1.5, 3.6) {\Large{$q_2$}};

    \node[] at (-4.5,-1.5) {\Large{$T_1$}};
    \node[] at (4.5, -1.5) {\Large{$T_3$}};
    \end{tikzpicture}
    \caption{The triangle with vertices $p_1$, $p_2$, and $p_3$.  Also depicted are the codewords appearing along each edge.  The points $q_1,q_2$ and triangles $T_1,T_2$ are as in the proof of Theorem~\ref{thm:notmonotone}.}
    \label{fig:codewordtriangle}
    \end{figure}
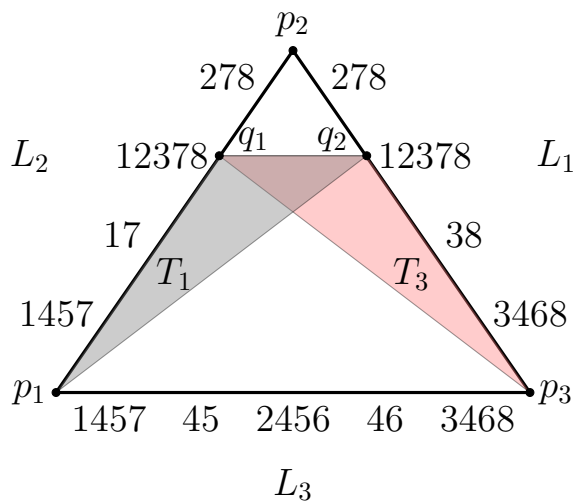 

Next, $L_1$ and $L_2$ both pass through $W_{12378}$ and intersect only at $p_2$, so we now choose distinct points $q_1$ and $q_2$ in $L_1\cap W_{12378}$ and $L_2\cap W_{12378}$, respectively. Now consider the triangles $T_1$ and $T_3$ with respective vertex sets 
$\{q_1, q_2, p_1\}$ and $\{q_1,q_2,p_3\}$ (see the figure). 
The vertices of $T_1$ are contained in $W_1$, so $T_1 \subset W_1$.  Similarly, $T_3 \subset W_3$.  Hence, $T_1 \cap T_3 \subset W_1 \cap W_3$.  
The intersection $T_1\cap T_2$ is full-dimensional (the doubly shaded region in Figure~\ref{fig:codewordtriangle}), 
and therefore so is $W_1\cap W_3$.

%

However, $W_1\cap W_3 = W_{123} = W_{12378}$ (because only the codeword $12378$ contains both neurons $1$ and $3$). So, by deleting the sets $W_7$ and $W_8$, we obtain a closed convex realization $\{W_1, W_2, \ldots, W_6\}$ of the code $\mathcal C_0$ in $A\cong\R^2$ with $W_{123}$ full-dimensional in $A$. This contradicts Lemma~\ref{lem:positivecodimension}, and so the proof is complete.
\end{proof}

\begin{remark} \label{rmk:closed-cvx-monotone}
 Theorem~\ref{thm:notmonotone} answered Question~\ref{q:closed-monotone} in the negative using a code on 8 neurons and codewords of size up to 5. We do not know whether such a result is possible using 7 or fewer neurons and/or codewords of size at most 4.
\end{remark}

\begin{remark} \label{rmk:prove-not-closed-cvx}
Previous works such as~\cite[Lemma 2.9]{cruz2019open} and~\cite[Theorem 4.1]{goldrup2014classification} have used minimum-distance arguments to prove that certain codes are not closed convex. Our proof of Theorem~\ref{thm:notmonotone} took a different approach, effectively reducing the argument to the case of open sets. 
In the future, we would like 
a general set of criteria that preclude closed convexity, and which prove, as special cases, that the 
code $\mathcal C$ of Theorem~\ref{thm:notmonotone} and the relevant codes in~\cite{cruz2019open,goldrup2014classification} are not closed convex.
\end{remark}

\subsection{Arbitrarily large increases in closed embedding dimension} \label{sec:large-increase-closed-emb-dim}

In Theorem~\ref{thm:notmonotone}, we saw that adding a non-maximal codeword to a closed convex code may yield a non-closed-convex code. 
Nevertheless, when the resulting code is closed convex, one might hope that its closed embedding dimension has not greatly increased, in line with the fact that open embedding dimension increases by at most 1 when a non-maximal codeword is added (recall Proposition~\ref{prop:monotone}). 

However, this is not the case. In fact, adding a non-maximal codeword may increase the closed embedding dimension by any amount, as we show in the next theorem.

\begin{defn}\label{def:Ad}
For $n\ge 2$, let $\mathcal A_n$ be the code whose neurons are $\{1,2,\ldots, n+1\}$ and $\{\overline{1},\overline{2},\ldots, \overline{n}\}$ and 
which consists of the following 
$2n+3$ codewords:
\begin{itemize}
    \item [(i)] The following three codewords: 
        $\{1, 2, \ldots, n, \overline{1},\overline{2},\ldots, \overline{n}\}$, $\{n+1\}$, and the empty set, 
    \item [(ii)] The codeword $\{i,\overline{i}, n+1\}$ for all $i\in[n]$, and
    \item[(iii)] The codeword $\{i, \overline{i}\}$ for all $i\in [n]$.
\end{itemize}
\end{defn}
For $i\in[n]$, note that the neuron $i$ appears in a codeword of $\mathcal A_n$ if and only if $\overline i$ appears. Thus the receptive fields of neurons $i$ and $\overline i$ will be identical in every realization of $\mathcal A_n$ (i.e., $U_i=U_{\overline{i}}$). 

\begin{thm}[Large increase in closed embedding dimension] \label{thm:finiteincrease}
For $n\ge 2$, the code $\mathcal A_n$ has a closed convex realization in $\R^2$, and 
the code 
$\mathcal A_n\cup \{\{\overline{1}, \overline{2}, \ldots, \overline{n}\}\}$ is closed convex with  closed embedding dimension equal to $n$. 
\end{thm}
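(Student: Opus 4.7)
The proof has three parts: a $\mathbb{R}^2$-realization of $\mathcal{A}_n$, a matching $\mathbb{R}^n$-realization after adding the new codeword, and a matching lower bound.

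\emph{Part 1 ($\mathcal{A}_n$ in $\mathbb{R}^2$).} Since neurons $i$ and $\overline i$ appear together in every codeword of $\mathcal{A}_n$, we are forced to take $V_i = V_{\overline i}$. Choose $V_1,\ldots,V_n$ to be $n$ narrow closed wedges emanating from the origin in distinct directions $\theta_i$, thin enough that the wedges pairwise intersect only at the origin. Take $V_{n+1}$ to be a closed half-plane disjoint from the origin but crossing each wedge far out. A routine inventory of regions gives exactly $\mathcal{A}_n$.

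\emph{Part 2 (upper bound in $\mathbb{R}^n$).} Let $e_1,\ldots,e_n$ be the standard basis and $\mathbf{1} = e_1+\cdots+e_n$. Define
\[ V_i = \mathrm{conv}\{0, 2e_i\}, \quad V_{\overline i} = \mathrm{conv}\{0, 2e_i, -\mathbf{1}\}, \quad V_{n+1} = \mathrm{conv}\{\mathbf{1}, 2e_1, \ldots, 2e_n\}. \]
Each is closed and convex. Barycentric-coordinate calculations verify: the big codeword is realized at the origin (where $V_i\cap V_j = \{0\}$ for $i\neq j$); the codeword $\{i,\overline i\}$ at $e_i$; the codeword $\{i,\overline i,n+1\}$ at $2e_i$; the codeword $\{n+1\}$ at $\mathbf{1}$; and the new codeword $\{\overline 1,\ldots,\overline n\}$ along the half-open segment from $0$ (exclusive) to $-\mathbf{1}$ (inclusive). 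This bounds the closed embedding dimension above by $n$.

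\emph{Part 3 (lower bound via the Sunflower Theorem).} Suppose there is a closed convex realization in $\mathbb{R}^d$ with $d < n$; we derive a contradiction. Let $K = V_{\overline 1}\cap\cdots\cap V_{\overline n}$. The only codewords containing more than one of the neurons $\overline 1,\ldots,\overline n$ are the big and the new codewords, and each of them contains \emph{all} of $\overline 1,\ldots,\overline n$. Hence $V_{\overline i}\cap V_{\overline j}=K$ for every $i\neq j$, so $\{V_{\overline 1},\ldots,V_{\overline n}\}$ is a sunflower of closed convex sets with center $K$. Since neither the big nor the new codeword contains $n+1$, we also have $K\cap V_{n+1}=\emptyset$. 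After intersecting each set with a large closed ball (preserving the code), we may assume everything is compact, and strict separation of the disjoint compact convex sets $K$ and $V_{n+1}$ produces a hyperplane $H$ with $K$ strictly on one side and $V_{n+1}$ strictly on the other. Picking a point $c_i$ realizing the codeword $\{i,\overline i, n+1\}$ (so $c_i\in V_{\overline i}\cap V_{n+1}$) and a point $b\in K\subset V_{\overline i}$, convexity gives $H\cap V_{\overline i}\neq\emptyset$ for every $i$, while $H\cap K=\emptyset$ by construction. Passing to interiors---after a small generic perturbation making $K$ and each $V_{\overline i}$ full-dimensional, by the same kind of maneuver used in Lemma~\ref{lem:positivecodimension}---yields an open-convex sunflower $\{\mathrm{int}(V_{\overline 1}),\ldots,\mathrm{int}(V_{\overline n})\}$ with center $\mathrm{int}(K)$ that is met by $H$ at every petal yet disjoint from its center. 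Theorem~\ref{thm:sunflower} then forces $n\leq d$, contradicting $d<n$.

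\emph{Main obstacle.} The delicate step is the final one: justifying the passage from the closed sunflower $\{V_{\overline i}\}$ to the open sunflower $\{\mathrm{int}(V_{\overline i})\}$ in such a way that the separating hyperplane $H$ still pierces each $\mathrm{int}(V_{\overline i})$ while avoiding $\mathrm{int}(K)$. This is precisely the closed-versus-open difficulty that also drives the proof of Lemma~\ref{lem:positivecodimension}, and handling it requires care, since without full-dimensionality of $K$ one cannot directly apply Theorem~\ref{thm:sunflower}.
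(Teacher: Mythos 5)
Your three-part plan matches the paper's structure, but both of the nontrivial parts have genuine gaps.

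\textbf{Part 2 is incorrect.} Your realization introduces an extra codeword. With $V_{\overline i}=\mathrm{conv}\{0,2e_i,-\mathbf 1\}$, take an interior point of this triangle, e.g.\ the centroid $\tfrac{2}{3}e_i-\tfrac{1}{3}\mathbf 1$. Its $i$-th coordinate is positive and all other coordinates are negative, so it lies in no $V_j$ (each is a nonnegative segment on an axis), in no $V_{\overline j}$ with $j\neq i$ (there the $i$-th coordinate is $\leq 0$), and not in $V_{n+1}$ (all of whose coordinates are nonnegative). Hence the codeword at this point is $\{\overline i\}$, which is not in $\mathcal A_n\cup\{\{\overline 1,\ldots,\overline n\}\}$. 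So $\{V_i,V_{\overline i},V_{n+1}\}$ does not realize the target code. The paper's construction avoids this by taking $Y_{\overline i}$ to be the ``thick ray'' $\{x\in\mathbb R^n_{\ge 0}:x_j\le 1\text{ for }j\ne i\}$: then every point of $Y_{\overline i}$ is either in the unit cube (hence in all $Y_{\overline j}$) or has $x_i>1$ (hence in $Y_i$), so $\{\overline i\}$ never appears alone.

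\textbf{Part 3 has the gap you yourself flagged, and the proposed fix does not work.} Your sunflower--separating-hyperplane setup only contradicts Theorem~\ref{thm:sunflower} if the center $K$ has nonempty interior in the ambient space, and a priori it may well be lower-dimensional (indeed, in the $\R^2$ realization of $\mathcal A_n$ the analogous center is a single point). A ``small generic perturbation'' cannot rescue this: perturbing the $V_{\overline i}$'s changes their intersection pattern and so changes the code, which is exactly what you cannot afford. Note also that Lemma~\ref{lem:positivecodimension} does \emph{not} perturb; it proceeds by contradiction assuming full-dimensionality, so it offers no precedent for a perturbation step. The paper closes this gap differently: it uses the point $p^*$ witnessing the new codeword $\{\overline 1,\ldots,\overline n\}$, together with points $p_i$ witnessing $\{i,\overline i,n+1\}$, passes to the affine hull $A$ of $\{p^*,p_1,\ldots,p_n\}$, picks points $q_i$ on the edges from $p^*$ to $p_i$ where the big codeword occurs, and shows by an explicit simplex construction that $V_{[n]}\cap A$ must be full-dimensional in $A$; this then contradicts Lemma~\ref{lem:Adcodimension} applied in $A$. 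That reduction is precisely what replaces your perturbation step, and it is the crux of why adding the codeword forces the dimension up. Your sunflower is over the barred neurons $\{V_{\overline i}\}$ while the paper's lemma uses the unbarred ones $\{V_i\}$; that choice is fine in itself, but it still requires the full-dimensionality argument you have not supplied.

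Part 1 is essentially the same idea as the paper (segments or thin wedges meeting at a point plus a crossing strip) and is fine modulo routine checking.
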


To prove Theorem~\ref{thm:finiteincrease} we first require a lemma similar to Lemma~\ref{lem:positivecodimension}.

\begin{lem}\label{lem:Adcodimension}
Assume $n\geq 2$. 
Let $\mathcal S_n$ be the code obtained from $\mathcal A_n$ by restricting to the neurons $\{1,2,\ldots, n+1\}$. Let $\{V_1,V_2, \ldots, V_{n+1}\}$ be a closed convex realization of $\mathcal S_n$ in $\R^d$. If $d<n$, then the region $V_{[n]}$ is not full-dimensional. 
\end{lem}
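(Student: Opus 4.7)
The plan is to mirror the proof of Lemma~\ref{lem:positivecodimension}, generalizing from three petals to $n$ petals, and reading the dimension bound off the Sunflower Theorem directly. First I would list out the codewords of $\mathcal S_n$ explicitly: restricting $\mathcal A_n$ to $\{1,2,\ldots,n+1\}$ yields $[n]$, the singletons $\{i\}$ and pairs $\{i,n+1\}$ for $i\in[n]$, the singleton $\{n+1\}$, and $\emptyset$. The key combinatorial observation is that $[n]$ is the \emph{only} codeword of $\mathcal S_n$ containing two or more elements of $[n]$, and that no codeword contains $[n]\cup\{n+1\}$.

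After assuming (as in the earlier lemma, and using~\cite[Remark 2.19]{new-obs}) that each $V_i$ is compact by intersecting with a large closed ball, I would suppose for contradiction that $V_{[n]}$ is full-dimensional in $\R^d$ with $d<n$. Set $U_i := \operatorname{int}(V_i)$ for $i\in[n]$. Because $[n]$ is the only codeword containing two distinct elements of $[n]$, one has $V_i\cap V_j = V_{[n]}$ for every pair $i\ne j$ in $[n]$. Since $V_{[n]}$ is full-dimensional, the standard fact that the interior of a pairwise intersection of convex sets equals the intersection of their interiors (whenever that intersection is full-dimensional) gives $U_i \cap U_j = \operatorname{int}(V_{[n]}) = U_{[n]}$ for all $i\ne j$ in $[n]$. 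Checking that no other codeword of size $\ge 2$ arises among the $U_i$'s (which follows from the same combinatorial observation), the collection $\{U_1,\ldots,U_n\}$ is a sunflower of convex open sets in $\R^d$.

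Next I would produce a hyperplane violating the Sunflower Theorem. Since no codeword contains $[n]\cup\{n+1\}$, the compact convex sets $V_{[n]}$ and $V_{n+1}$ are disjoint, and the Hahn--Banach theorem furnishes a hyperplane $H\subset\R^d$ strictly separating them. I would then show $H\cap U_i\ne\emptyset$ for each $i\in[n]$ using the codeword $\{i,n+1\}$: pick $p\in\operatorname{int}(V_{[n]})\subseteq U_i$ (which lies strictly on the $V_{[n]}$-side of $H$) and $q\in V_i\cap V_{n+1}$ coming from the region of $\{i,n+1\}$ (which lies strictly on the $V_{n+1}$-side of $H$). The segment from $p$ to $q$ is in $V_i$ by convexity; its intersection with $H$ is a point of the form $(1-t)p+tq$ with $t\in(0,1)$, and since $p\in\operatorname{int}(V_i)$ this point lies in $\operatorname{int}(V_i) = U_i$, giving the desired meeting point on $H$. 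On the other hand $U_{[n]}\subseteq V_{[n]}$ lies entirely on one side of $H$, so $H\cap U_{[n]}=\emptyset$.

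Finally, because $n>d$ by hypothesis, Theorem~\ref{thm:sunflower} applies to $\{U_1,\ldots,U_n\}$ and forces any hyperplane meeting every $U_i$ to also meet $U_{[n]}$, contradicting the previous step. Hence $V_{[n]}$ cannot be full-dimensional. The main obstacle I anticipate is the delicate step of showing $H\cap U_i\ne\emptyset$ rather than merely $H\cap V_i\ne\emptyset$; the subtlety is that the region realizing $\{i,n+1\}$ need only lie in $V_i$, not in its interior, so the convex-combination argument with an interior point of $V_{[n]}$ is essential. Once that is in hand, the remainder is bookkeeping following the template of Lemma~\ref{lem:positivecodimension}.
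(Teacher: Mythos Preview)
Your proposal is correct and follows essentially the same approach as the paper's proof: reduce to compact sets, assume $V_{[n]}$ is full-dimensional, observe that the interiors $U_1,\ldots,U_n$ form a sunflower, separate $V_{[n]}$ from $V_{n+1}$ by a hyperplane $H$, check that $H$ meets every $U_i$ but not $U_{[n]}$, and contradict Theorem~\ref{thm:sunflower} since $n>d$. The only cosmetic difference is in how you verify $H\cap U_i\neq\emptyset$: you argue via a convex combination of an interior point $p\in\operatorname{int}(V_{[n]})$ and a point $q\in V_i\cap V_{n+1}$, while the paper simply notes that $V_i$ has points strictly on both sides of $H$ and hence $H$ meets $\operatorname{int}(V_i)$; both arguments are standard and equivalent.
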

\begin{proof}
 By intersecting each of the $V_i$'s by a single sufficiently large closed ball, we may assume that each $V_i$ is compact
(cf.~\cite[Remark 2.19]{new-obs}).
Suppose for contradiction that $V_{[n]}$ is full-dimensional. Then the sets $V_1,V_2, \ldots, V_n$ are also full-dimensional. 

Next, from Definition~\ref{def:Ad}, we see that 
$\mathcal S_n = \{  [n],
\{1,n+1\}, \{2,n+1\}, \dots, \{n,n+1\},
\{1\},\{2\},\dots,\{n+1\}, \emptyset \}$. 
So, $\{V_1, V_2, \ldots, V_n\}$ forms a sunflower, that is,
 $V_i \cap V_j = V_{[n]} \neq \emptyset $ for all $1 \leq i < j \leq n$.
We claim that the interiors $\{U_1,U_2, \ldots, U_n\}$ of the $V_i$ also form a sunflower. Indeed, for all $1\leq i < j \leq n$, we see that $U_i\cap U_j$ is the interior of $V_i\cap V_j = V_{[n]}$, which is nonempty because $V_{[n]}$ is full-dimensional. 


Next, observe that $V_{[n]}$ is disjoint from $V_{n+1}$, because $[n+1]$ is not a codeword of $\mathcal S_n$.   As both $V_{[n]}$ and $V_{n+1}$ are compact and convex, there exists a hyperplane $H$ properly separating the two sets. 
For all $i\in[n]$, the set $V_i$ intersects $V_{n+1}$ (because $\{i,n+1\} \in \mathcal S_n$) and so each side of $H$ properly contains a point in $V_i$. Thus, $H$ passes through the interior of $V_i$ for all $i\in[n]$, but does not pass through their common intersection $U_{[n]}$. When $d < n$, this 
contradicts Theorem~\ref{thm:sunflower}.
\end{proof}

We are now ready to prove the main result of this subsection.

\begin{proof}[Proof of Theorem~\ref{thm:finiteincrease}]
A closed convex realization $\{W_1,W_2, \dots, W_{n+1},~ W_{\overline{1}}, W_{\overline{2}}, \dots, W_{\overline{n}}, \} $ of $\mathcal A_n$ in $\mathbb{R}^2$ is shown in Figure~\ref{fig:Ad}: $n$ line segments meet at a common point, and an additional line segment 
crosses all the line segments away from the common point. 

\begin{figure}[ht] 
    \centering
    \begin{tikzpicture}[scale=0.35]
    \draw[very thick, line cap = round] (9,-4)--(0,4)--(-9,-4);
    \draw[very thick, line cap = round] (-3,-4)--(0,4);
    \draw[very thick, line cap = round] (-9,-2)--(9,-2);
    \node[] at (11,-2) {\Large{$W_{n+1}$}};
    \node[] at (9.2,-5) {\Large{$W_n = W_{\overline n}$}};
    \node[] at (-10, -5) {\Large{$W_1 = W_{\overline 1}$}};
    \node[] at (-3,-5) {\Large{$W_2 = W_{\overline 2}$}};
    \node[] at (3.5, -5) {\Large{$\cdots$}};

    \end{tikzpicture}
    \caption{A closed convex realization of $\mathcal A_n$ in $\R^2$.}
    \label{fig:Ad}
    \end{figure}
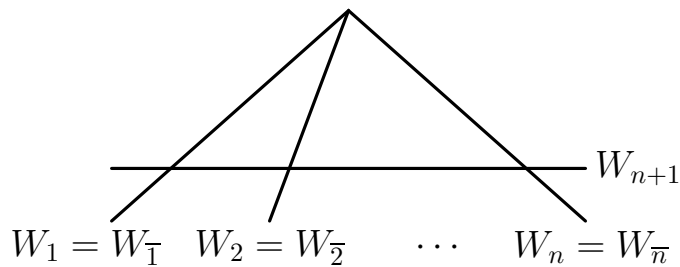 

Our next task is to construct a closed convex realization of 
$\mathcal C_n := \mathcal A_n \cup \{\{\overline{1}, \overline{2}, \ldots, \overline{n}\}\}$ in $\mathbb{R}^n$. 
An informal description of this construction is as follows.
Starting from the realization of $\mathcal{A}_n$ in Figure~\ref{fig:Ad}, rotate each 
$W_i=W_{\overline{i}}$ so it lies along the $i$-th coordinate axis in $\mathbb{R}^n$ and then fatten it into an $n$-dimensional rectangular prism, so that the common intersection is a unit $n$-cube at the origin.  Next, $W_{n+1}$ becomes a thickened hyperplane that meets each of the $W_i=W_{\overline{i}}$'s.  So far, we have a realization of $\mathcal A_n$, and now we obtain the new codeword $\{\overline{1}, \overline{2}, \ldots, \overline{n}\}$ by ``slicing off'' a corner of the $n$-cube from each of $W_1, W_2,\dots, W_n$.  This construction is shown for $n=2$ in Figure~\ref{fig:A2}.

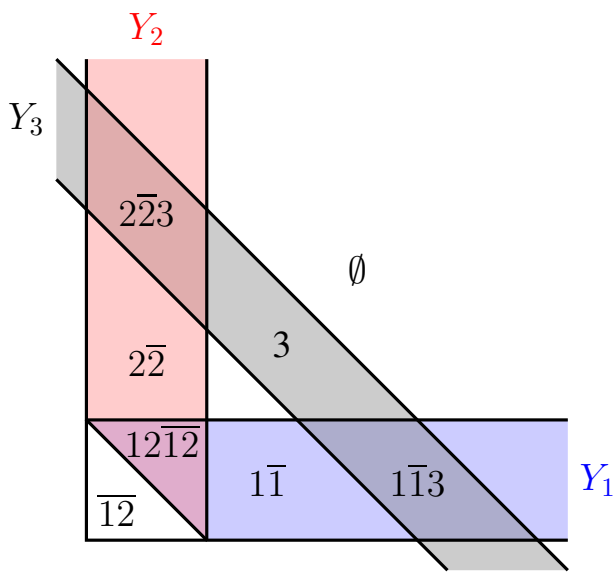
\begin{figure}[ht] 
    \centering
    \begin{tikzpicture}[scale=0.4]
    \node[] at (-9,7) {\Large{$Y_3$}};
    \node[] at (10,-5) {\Large{{\color{blue} $Y_1$}}};
    \node[] at (-5,10) {\Large{{\color{red} $Y_2$}}};
    \fill[fill = blue!50, opacity=0.4] (9,-3)--(9,-7)--(-3,-7)--(-7,-3); 
    \fill[fill = black!50, opacity=0.4] (9,-8)--(5,-8)--(-8,5)--(-8,9); 
    \fill[fill = red!50, opacity=0.4] (-7, 9)--(-3,9)--(-3,-7)--(-7,-3); 
    \draw[very thick] (9,-8)--(-8,9);
    \draw[very thick] (5,-8)--(-8,5);
    \draw[very thick] (-7, -3)--(-7,-7)--(-3,-7);
    \draw[very thick] (-3, 9)--(-3,-7);
    \draw[very thick] (-7,-3)--(9,-3);
    \draw[very thick] (9,-7)--(-3,-7)--(-7,-3)--(-7,9);

    \node[] at (-6,-6) {\Large{$\overline{12}$}};    
    \node[] at (-4.5,-3.7) {\Large{$12\overline{12}$}};
    \node[] at (-1,-5) {\Large{$1\overline{1}$}};
    \node[] at (-5,-1) {\Large{$2\overline{2}$}};
    \node[] at (4,-5) {\Large{$1\overline{1}3$}};
    \node[] at (-5,4) {\Large{$2\overline{2}3$}};
    \node[] at (-0.5,-0.5) {\Large{3}};
    \node[] at (2,2) {\Large{$\emptyset$}};
    \end{tikzpicture}
    \caption{A closed convex realization of $\mathcal{C}_2$ 
    in $\R^2$. The receptive fields $Y_1$, $Y_2$, and $Y_3$ are labeled; 
    and $Y_{\overline{1}}$ and $Y_{\overline{2}}$ are rectangles (as indicated by the codewords).}
    \label{fig:A2}
    \end{figure} 

More precisely, 
the realization 
$\{Y_1,Y_2, \dots, Y_{n+1},~ Y_{\overline{1}}, Y_{\overline{2}}, \dots , Y_{\overline{n}} \}$
of $\mathcal C_n$ in $\R^n$ is 
given by, for $i \in [n]$,
${Y}_{\overline i}:= \{x \in \mathbb{R}^n_{\geq 0} \mid 0 \leq x_j \leq 1 ~\mathrm{for~all }~ j \neq i \} $ and
$Y_i:= 
{Y}_{\overline i} \cap \{x \in \mathbb{R}^n \mid x_1+x_2 + \dots x_n \geq 1 \}$;
and 
$Y_{n+1}:=\{x \in \mathbb{R}^n \mid 2n \leq x_1+x_2 + \dots + x_n \leq 2n+1 \}$. 
%
In this realization, the codeword $\{\overline 1,\overline 2,\ldots,\overline n\}$ arises in the region of the unit $n$-cube where the sum of all coordinates is less than~1. In the remainder of the realization, we see that $i\in[n]$ appears if and only if $\overline i$ appears. Moreover, the receptive fields
$Y_i$,
for $i\in[n]$, form a sunflower whose petals meet in the subset of the $n$-cube where the sum of coordinates is greater than or equal to 1, and so the only codewords arising involving $i$ are $\{i,\overline i\}$, $\{1,2,\ldots, n,\overline 1, \overline 2,\ldots, \overline n\}$, and the codeword $\{i,\overline i, n+1\}$, the latter arising where the thickened hyperplane 
$Y_{n+1}$
meets the receptive field of $i$. Finally, the codeword $\{n+1\}$ arises anywhere in the thickened hyperplane 
$Y_{n+1}$
where it does not meet any of the $Y_i$'s,
for example, at a point where one of the coordinates is negative. 

Now it remains only to show that there is no closed convex realization of $\mathcal C_n$ in $\R^{n-1}$. Suppose for contradiction that we have such a realization 
$\{V_1,V_2, \ldots, V_{n+1}, ~ {V}_{\overline{1}},{V}_{\overline{2}}, \ldots, {V}_{\overline{n}}\}$. Choose a point $p^*$ in the region that gives rise to the codeword $\{\overline{1}, \overline{2}, \ldots, \overline{n}\}$, and for $i\in[n]$ choose a point $p_i$ in $V_i\cap V_{n+1}$ (i.e., $p_i$ lies in the region that gives rise to the codeword $\{ i, \overline{i}, n+1\}$).

For $i\in[n]$, let $L_i$ denote the line segment from $p^*$ to $p_i$. 
Observe that the containment $L_i \subset V_{\overline{i}}$ holds (as both endpoints of $L$ are in $V_{\overline{i}}$).
Also, the codewords $\{\overline{1}, \overline{2}, \ldots, \overline{n}\}$, $\{1,2,\ldots, n, \overline{1}, \overline{2}, \ldots, \overline{n}\}$, $\{i,\overline{i}\}$, and $\{i, \overline{i}, n+1\}$ appear along $L_i$ in precisely that order. 
Also, the codeword $\{n+1\}$ must arise along any line segment between distinct $p_i$. Thus, all codewords of $\mathcal C_n$ arise inside the affine hull of $\{p^*, p_1,\ldots, p_n\}$.
We denote this affine hull by $A$.

It follows that by replacing our receptive fields $\{V_1,V_2, \ldots, V_{n+1},~{V}_{\overline{1}},V_{\overline{2}}, \ldots, {V}_{\overline{n}}\}$ by their intersections with $A$, we obtain a closed convex realization of $\mathcal C_n$ inside $A\cong \R^d$, for some $d \le n-1$, such that the convex hull of the points $\{p^*,~p_1,p_2, \ldots, p_n\}$ is full-dimensional in $A$ (by construction). Observe that $d\ge 2$, as $\mathcal C_n$ is not convex in $\R^1$; one reason for this is that $\Delta(\mathcal C_n)$ has a 1-dimensional hole.

The code $\mathcal C_n$ is invariant under permutations of $[n]$ provided we also apply the permutation to $\{\overline{1},\overline{2},\ldots, \overline{n}\}$, and so we may assume without loss of generality that $\{p^*,~ p_1,p_2, \ldots, p_d\}$ form the vertices of a $d$-simplex $\Delta$ in $A$. 

For $i\in[d]$, each $L_i$ is a distinct edge of $\Delta$, and we let $q_i$ be a point on $L_i$ where the codeword $\{1,2,\ldots, n,~ \overline{1},\overline{2},\ldots, \overline{n}\}$ arises. In particular, $ p_i \neq q_i \in V_{[n]}$. Since the $q_i$ lie on distinct edges of $\Delta$, the affine hull $H$ of $\{q_1, q_2, \ldots, q_d\}$ has dimension $d-1$ and so is a hyperplane $H\subset A$. We orient $H$ so that its negative side contains $p^*$ and hence its positive side contains $\{p_1,p_2 , \ldots, p_d\}$.

For $i\in[d]$, let $\Delta_i$ be the $d$-simplex with vertices $\{q_1, q_2, \ldots, q_d,~ p_i\}$, and observe that $\Delta_i\subset V_i$. Since all $\Delta_i$ lie on the nonnegative side of $H$ and share the common face whose vertices are $\{q_1,q_2, \ldots, q_d\}$, we may choose a point $q^*$ that lies in the interior of all $\Delta_i$, and hence in $V_{[d]}$. Since $d\ge 2$ and $\{V_1, V_2, \ldots, V_n\}$ is a sunflower, we have $V_{[d]} = V_{[n]}$. Thus, $q^*$ lies in $V_{[n]}$.
The point $q^*$ lies strictly on the positive side of $H$, and so the convex hull of $\{q^*,~ q_1,q_2,\ldots, q_d\}$ is a $d$-simplex contained in $V_{[n]}$. Therefore, $V_{[n]}$ is full-dimensional in $A$. Since $d < n$, this contradicts Lemma \ref{lem:Adcodimension}.
\end{proof}

\subsection{A counterexample to a conjecture of Goldrup and Phillipson} \label{sec:Gold-Phil}
Recall that Theorem \ref{thm:notmonotone} contrasts open convex codes with closed convex codes by showing that the latter does not possess the same monotonicity property as the former. In the same spirit, 
there is much interest in comparing and relating open convexity to closed convexity. 
In particular, we would like to know properties that cause codes to be solely open convex or solely closed convex. 
In an attempt to distinguish codes that are open convex but not closed convex,
Goldrup and Phillipson posed the following conjecture~\cite[Conjecture 4.3]{goldrup2014classification}.


\begin{conj}\label{conj3} 
Let $\mathcal{C}$ be a max-intersection incomplete open convex code, where $\Delta(\mathcal{C})$ has at least two non-mandatory codewords not contained in $\mathcal C$. Suppose $\mathcal{C}$ has at least three maximal codewords $M_1, M_2, M_3$,
and there is $\sigma\subset M_1$ with $\sigma\in \mathcal{C}$ such that $\sigma\cap M_2\not\in \mathcal{C}$. Then $\mathcal{C}$ is not closed convex.
\end{conj}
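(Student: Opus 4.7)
The plan is to disprove Conjecture~\ref{conj3} by exhibiting an explicit counterexample: a code that satisfies every hypothesis of the conjecture yet nevertheless \emph{is} closed convex. The natural candidate is the code $\mathcal C$ already introduced in Example~\ref{ex:goldrup-phillipson}; Example~\ref{ex:goldrup-phillipson-again} has already produced realizations showing that $\mathcal C$ is both open convex (Figure~\ref{fig:bri_counter_open}) and closed convex (by closing up each $U_i$). Thus the whole argument reduces to checking that this same $\mathcal C$ meets each hypothesis of Conjecture~\ref{conj3}.

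Most of these hypotheses can be read straight off the definition of $\mathcal C$. Open convexity and max-intersection incompleteness ($135 \cap 236 = 3 \notin \mathcal C$) are already recorded in the paper. The code has four maximal codewords $123$, $124$, $135$, $236$, so one may select, for instance, $M_1 = 135$, $M_2 = 236$, $M_3 = 123$; and then taking $\sigma = 13 \in \mathcal C$, one has $\sigma \subset M_1$ and $\sigma \cap M_2 = 3 \notin \mathcal C$, supplying the required data.

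The only substantive verification is that $\Delta(\mathcal C)$ contains at least two non-mandatory codewords not in $\mathcal C$. Using Definition~\ref{def_link}, I would compute the links of the singletons $4,5,6 \notin \mathcal C$: the only maximal face of $\Delta(\mathcal C)$ containing $4$ is $124$, so $\text{Lk}_4(\Delta(\mathcal C))$ is the set of all subsets of $\{1,2\}$; analogously $\text{Lk}_5(\Delta(\mathcal C))$ is the set of all subsets of $\{1,3\}$, and $\text{Lk}_6(\Delta(\mathcal C))$ is the set of all subsets of $\{2,3\}$. Each of these is a single filled edge and hence contractible, so by Definition~\ref{def_mandatory} the codewords $4,5,6$ are all non-mandatory and all lie outside $\mathcal C$, comfortably satisfying the hypothesis.

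Since $\mathcal C$ fulfills every hypothesis of Conjecture~\ref{conj3} yet is closed convex, the conjecture fails. I do not anticipate any real obstacle in writing this up: the closed convex realization is already in hand from Example~\ref{ex:goldrup-phillipson-again}, and the only new work is the routine link analysis above. If anything, the ease of this counterexample suggests that the hypotheses of Conjecture~\ref{conj3} are rather far from being sufficient to preclude closed convexity, a point worth flagging in the surrounding discussion.
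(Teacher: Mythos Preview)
Your proposal is correct and follows essentially the same approach as the paper: both use the code $\mathcal{C}$ from Example~\ref{ex:goldrup-phillipson} as a counterexample and verify the hypotheses of Conjecture~\ref{conj3} directly, relying on Example~\ref{ex:goldrup-phillipson-again} for open and closed convexity. The only cosmetic differences are which non-mandatory missing codewords are exhibited (you use $4,5,6$; the paper uses $3,4$) and which labeling of $M_1,M_2,M_3$ is chosen, neither of which affects the argument.
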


We disprove Conjecture~\ref{conj3} through a counterexample, namely, the code from Example~\ref{ex:goldrup-phillipson}.

\begin{theorem} \label{thm:goldrup-phillipson}
The neural code $\mathcal{C}=
    \{
    \mathbf{123},\mathbf{124},\mathbf{135},\mathbf{236},
    12,13,14,23,24,
    1,2,
    \emptyset
    \}$ 
    fulfills the hypotheses of Conjecture \ref{conj3} and is closed convex. 
\end{theorem}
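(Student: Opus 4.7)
My plan is to verify that $\mathcal{C}$ satisfies all hypotheses of Conjecture~\ref{conj3} and then exhibit an explicit closed convex realization, thereby disproving the conjecture.

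First I would check the hypotheses. The maximal codewords are $\mathbf{123}, \mathbf{124}, \mathbf{135}, \mathbf{236}$ -- four in total, so at least three. The only pairwise intersection of maximal codewords not contained in $\mathcal C$ is $135 \cap 236 = 3$, so $\mathcal C$ is max-intersection incomplete. By Example~\ref{ex:goldrup-phillipson-simplicial-cpx}, the faces of $\Delta(\mathcal C)$ not lying in $\mathcal C$ are $3, 4, 5, 6, 15, 26, 35, 36$. Since $\mathcal C$ is open convex, it is locally good, so all eight of these missing faces are non-mandatory, giving at least two. Finally, setting $M_1 = 135$ with $\sigma = 135 \in \mathcal C$ and $M_2 = 236$, we get $\sigma \cap M_2 = 3 \notin \mathcal C$, supplying the required witness. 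All four hypotheses are met.

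Next I would produce a closed convex realization by setting $V_i := \overline{U_i}$, where the $U_i$'s are the open convex sets from Figure~\ref{fig:bri_counter_open}. Each $V_i$ is closed and convex, so it suffices to show $\mathcal C = {\rm code}(\{V_1, \ldots, V_6\}, \R^2)$. The inclusion $\mathcal C \subseteq {\rm code}(\{V_1, \ldots, V_6\}, \R^2)$ is essentially immediate: every atom of the open realization in Figure~\ref{fig:bri_counter_open} is top-dimensional, and any interior point of such an atom realizes the same codeword under the closed realization, since closure adds no new memberships at points already interior to each relevant region and disjoint from the other sets.

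The main obstacle is the reverse inclusion -- verifying that taking closures introduces no new codewords. For each $\tau \subseteq [6]$ with $\tau \notin \mathcal C$, I must check that $V_\tau \setminus \bigcup_{j \notin \tau} V_j = \emptyset$. Any point contributing to a hypothetical new codeword must lie on the boundary of some $U_i$, and a direct inspection of Figure~\ref{fig:bri_counter_open} shows that every boundary point of a given $U_i$ either lies in some additional $V_j$ (so the realized codeword is already in $\mathcal C$) or lies on a common boundary of atoms that are themselves labeled by codewords of $\mathcal C$. This geometric case analysis, though routine, is the crux of the argument: one verifies for each of the eight missing faces in $\Delta(\mathcal C)\setminus\mathcal C$ that no boundary intersection produces that label. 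Once this is done, $\mathcal C$ is closed convex, and Conjecture~\ref{conj3} is refuted.
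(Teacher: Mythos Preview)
Your proposal is correct and follows essentially the same approach as the paper: verify each hypothesis of Conjecture~\ref{conj3} in turn, then appeal to the realization in Figure~\ref{fig:bri_counter_open} (taking closures) for closed convexity. One small difference worth noting: the paper verifies that $3$ and $4$ are non-mandatory by explicitly computing $\mathrm{Lk}_{\{3\}}(\Delta(\mathcal C))$ and $\mathrm{Lk}_{\{4\}}(\Delta(\mathcal C))$ and observing they are contractible, whereas you invoke the implication ``open convex $\Rightarrow$ locally good'' to conclude that \emph{every} face of $\Delta(\mathcal C)\setminus\mathcal C$ is non-mandatory---a cleaner shortcut that avoids any link computation. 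Your choice of witness ($M_1=135$, $\sigma=135$, $M_2=236$) differs from the paper's ($M_1=123$, $\sigma=13$, $M_2=236$) but is equally valid, and your sketch of why passing to closures introduces no new codewords is exactly what the paper asserts (without further detail) in Example~\ref{ex:goldrup-phillipson-again}.
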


\bp 
We begin by checking that $\mathcal{C}$ satisfies the hypotheses of Conjecture 3.1. First, we saw that $\mathcal{C}$ is open convex (Example~\ref{ex:goldrup-phillipson}).  Next, $\mathcal{C}$ is max-intersection incomplete, as the intersection of maximal codewords $135 \cap 236 = 3$ is not in $\mathcal{C}$. 

We must also show that $\Delta(\mathcal{C})$ has at least two non-mandatory codewords that are not in $\mathcal C$. 
It is straightforward to check that the links $\text{Lk}_{\{3\}}(\Delta(\mathcal{C}))$ and  $\text{Lk}_{\{4\}}(\Delta(\mathcal{C}))$ are the following contractible simplicial complexes (respectively):
\begin{center}
    \begin{tikzpicture}[scale=0.25]
    \draw[fill=black] (-3,0) circle [radius = 0.2] {};
    \draw[fill=black] (-1,0) circle [radius = 0.2] {};
    \draw[fill=black] (1,0) circle [radius = 0.2] {};
    \draw[fill=black] (3,0) circle [radius = 0.2] {};
    \draw[ultra thick] (-3,0) -- (3,0);
    \node[] at (-3,0) [label=below:{{$5$}}] {};
    \node[] at (-1,0) [label=below:{{$1$}}] {};
    \node[] at (1,0) [label=below:{{$2$}}] {};
    \node[] at (3,0) [label=below:{{$6$}}] {};
    \draw[fill=black] (6,0) circle [radius = 0.2] {};
    \draw[fill=black] (10,0) circle [radius = 0.2] {};
    \draw[ultra thick] (6,0) -- (10,0);
    \node[] at (6,0) [label=below:{{$1$}}] {};
    \node[] at (10,0) [label=below:{{$2$}}] {};
    \end{tikzpicture}
\end{center}
Therefore, $3$ and $4$ are non-mandatory codewords.  Also, neither $3$ nor $4$ is in $\mathcal{C}$.

Next, we must show that $\mathcal{C}$ has three maximal codewords $M_1, M_2, M_3$ and a codeword $\sigma \in \mathcal{C}$ such that $\sigma \subset M_1$ and $\sigma \cap M_2 \notin \mathcal{C}$. Let $M_1 = 123, M_2 = 236, M_3 = 124$, and let $\sigma = 13 \in \mathcal{C}$. 
Then  $13=\sigma \subset M_1=123$. Also, $13 \cap 236 = \sigma \cap M_2 = 3 \notin \mathcal{C}$.

Finally, we already saw (in Example~\ref{ex:goldrup-phillipson}) that $\mathcal C$ is closed convex.  
\ep

Although Conjecture~\ref{conj3} is false, we nevertheless are interested, as mentioned earlier, in future conditions guaranteeing that a code is open convex but not closed convex -- or vice-versa.

\subsection{A locally good code that is neither open convex nor closed convex} \label{sec:neither-open-nor-closed}

Next, we consider codes that are neither open convex nor closed convex. 
Based on the examples in this work and in other articles, one might wonder whether every locally good codes is open convex or closed convex.  
 However, here
we present a code on 8 neurons that, despite being locally good, is neither open convex nor closed convex (Theorem~\ref{thm:neither-open-nor-closed}).  As seen in the proof, this code is built by combining two locally good codes, 
one that is not closed convex 
and the other not open convex. 

\begin{theorem} \label{thm:neither-open-nor-closed}
The following code 
is locally good, but neither open convex nor closed convex:
\[
\mathcal{C}~=~
 \{
 \mathbf{2345,123,124,145},
 12,14,23,24,45,
 2, 4, 
 \emptyset
 \} 
 \cup
 \{
 \mathbf{ 237,238,367,678},
 26,37,67,
 6,8
 \}~.
\] 
\end{theorem}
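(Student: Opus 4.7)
The plan splits into three parts: verify $\mathcal C$ is locally good, rule out open convexity, and rule out closed convexity. Both non-convexity arguments will use the restriction principle (from the geometric interpretation after Definition~\ref{def:restricted}): if $\{U_1,\ldots,U_8\}$ is an open (respectively closed) convex realization of $\mathcal C$ in some $\R^d$, then for any $\tau\subseteq[8]$ the subfamily $\{U_i : i\in\tau\}$ is an open (respectively closed) convex realization of $\mathcal C|_\tau$. Hence to rule out a convexity property for $\mathcal C$ it suffices to exhibit a restriction that fails the same property.

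For local goodness, I would compute $\Delta(\mathcal C)$, whose maximal faces are $2345$, $123$, $124$, $145$, $237$, $238$, $367$, $678$, together with $26$ (which is not contained in any other codeword). Then for each nonempty $\sigma\in\Delta(\mathcal C)$ I would compute $\mathrm{Lk}_\sigma(\Delta(\mathcal C))$ and check that whenever its geometric realization is non-contractible, $\sigma$ is already a codeword of $\mathcal C$. This is a finite case analysis; for instance, $\mathrm{Lk}_{\{4\}}(\Delta(\mathcal C))$ consists of the filled triangle on $\{2,3,5\}$ with the edges $\{1,2\}$ and $\{1,5\}$ attached, producing a nontrivial $1$-cycle, so $\{4\}$ is mandatory and indeed $4\in\mathcal C$. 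The remaining links are of similar character.

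To rule out open convexity, I would restrict to $\tau=\{1,2,3,4,5\}$. A direct computation gives
\[
\mathcal C|_{\{1,2,3,4,5\}} = \{2345, 123, 124, 145, 12, 14, 23, 24, 45, 2, 3, 4, \emptyset\},
\]
which has the same simplicial complex as, and is a subcode of, the Lienkaemper--Shiu--Woodstock code on five neurons \cite{lienkaemper2017obstructions}, known to be locally good but not open convex. By the contrapositive of Proposition~\ref{prop:monotone} (monotonicity), $\mathcal C|_{\{1,2,3,4,5\}}$ is therefore not open convex, so $\mathcal C$ is not open convex either.

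To rule out closed convexity, I would restrict to $\tau=\{2,3,6,7,8\}$, yielding a code whose maximal codewords $237, 238, 367, 678, 26$ exhibit a pairwise-intersection pattern (with $3$ and $7$ absent as codewords) that is primed for a sunflower-theorem obstruction in the spirit of Lemma~\ref{lem:positivecodimension}. The strategy is to pick points realizing the four triples, pass to a low-dimensional affine section containing them, and argue that in any purported closed convex realization the interiors of a suitable triple among $V_2, V_3, V_6, V_7, V_8$ must form a sunflower, with a separating hyperplane piercing each petal but missing their common intersection, contradicting Theorem~\ref{thm:sunflower}. The main obstacle lies here: unlike the open-convex half, closed convexity is non-monotone (Theorem~\ref{thm:notmonotone}), so one cannot simply compare to a larger known non-closed-convex code via monotonicity, and the sunflower argument must be tailored directly to the restricted code $\mathcal C|_{\{2,3,6,7,8\}}$.
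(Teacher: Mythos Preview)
Your high-level plan---verify local goodness, then kill open convexity by restricting to $\{1,2,3,4,5\}$ and closed convexity by restricting to $\{2,3,6,7,8\}$---is exactly the paper's route. Two of your three pieces can be tightened to match the paper, and the third has a genuine gap.

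\textbf{Local goodness.} You need not inspect every link. The paper invokes the characterization from \cite[Theorem~1.3 and Lemma~1.4]{curto2017makes}: it suffices to check contractibility of $\mathrm{Lk}_\sigma(\Delta(\mathcal C))$ only for nonempty $\sigma\notin\mathcal C$ that are intersections of maximal codewords. Here those are exactly $\sigma\in\{1,3,7\}$, and each link is a path (hence contractible). Your observation that $26$ is a maximal face of $\Delta(\mathcal C)$ is correct, incidentally.

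\textbf{Non-open-convexity.} Your computed restriction $\mathcal C|_{\{1,2,3,4,5\}}=\{2345,123,124,145,12,14,23,24,45,2,3,4,\emptyset\}$ \emph{equals} the code $C4$ of \cite[Table~1]{goldrup2014classification} (the Lienkaemper--Shiu--Woodstock code), which is already known to be non-open-convex. The monotonicity detour is unnecessary; just cite the result.

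\textbf{Non-closed-convexity.} This is where your proposal diverges from the paper and is incomplete. The paper does not run a sunflower argument: it observes that under the relabeling $2,3,6,7,8\mapsto 1,3,2,4,5$ the restriction $\mathcal C|_{\{2,3,6,7,8\}}$ is the code $C10$ of \cite[Table~1]{goldrup2014classification}, and then cites \cite[Theorem~4.1]{goldrup2014classification} for its non-closed-convexity. That proof is a minimum-distance argument (cf.\ Remark~\ref{rmk:prove-not-closed-cvx}), not a sunflower argument. Your sketch asserts that ``the interiors of a suitable triple among $V_2,V_3,V_6,V_7,V_8$ must form a sunflower,'' but no such triple exists in this code: for every choice of three neurons, the pairwise intersections are forced by the codewords to be distinct (e.g.\ $V_3\cap V_6$ lives in the $367$-region while $V_3\cap V_8$ lives in the $238$-region). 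So Theorem~\ref{thm:sunflower} does not apply in the way you suggest, and you should instead cite the Goldrup--Phillipson result directly, as the paper does.
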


\begin{proof}
By~\cite[Theorem 1.3 and Lemma 1.4]{curto2017makes}, 
being locally good is equivalent to the following: If $\emptyset \neq \sigma \notin \mathcal C$ and $\sigma$ is the intersection of two or more maximal codewords of $\mathcal C$, then ${\rm Lk}_{\sigma}(\Delta{(\mathcal C}))$ is contractible.  
It is straightforward to check that only 
$1$, $3$, and $7$
are nonempty intersections of maximal codewords and not in $\mathcal C$.  Their links in $\Delta(\mathcal C)$, respectively, are shown below:
\begin{center}
    \begin{tikzpicture}[scale=0.3]
    \draw[fill=black] (-3,0) circle [radius = 0.15] {};
    \draw[fill=black] (-1,0) circle [radius = 0.15] {};
    \draw[fill=black] (1,0) circle [radius = 0.15] {};
    \draw[fill=black] (3,0) circle [radius = 0.15] {};
    \draw[ultra thick] (-3,0) -- (3,0);
    \node[] at (-3,0) [label=below:{{$3$}}] {};
    \node[] at (-1,0) [label=below:{{$2$}}] {};
    \node[] at (1,0) [label=below:{{$4$}}] {};
    \node[] at (3,0) [label=below:{{$5$}}] {};
    \draw[fill=black] (8,0) circle [radius = 0.15] {};
    \draw[fill=black] (10,0) circle [radius = 0.15] {};
    \draw[fill=black] (12,0) circle [radius = 0.15] {};
    \draw[fill=black] (11,2) circle [radius = 0.15] {};
    \draw[fill=black] (12,2) circle [radius = 0.15] {};
    \draw[fill=black] (14,0) circle [radius = 0.15] {};
    \draw[fill=black] (14,-2) circle [radius = 0.15] {};
    \fill[black!40] (14,0) -- (14, -2) -- (12,0);
    \draw[ultra thick] (8,0) -- (14,0);
    \draw[ultra thick] (12,2) -- (12,0);
    \draw[ultra thick] (11,2) -- (12,0);
    \draw[ultra thick] (14,0) -- (14, -2) -- (12,0);
    \node[] at (8,0) [label=below:{{$6$}}] {};
    \node[] at (10,0) [label=below:{{$7$}}] {};
    \node[] at (12,0) [label=below:{{$2$}}] {};
    \node[] at (12,2) [label=right:{{$8$}}] {};
    \node[] at (11,2) [label=left:{{$1$}}] {};
    \node[] at (14,0) [label=right:{{$4$}}] {};
    \node[] at (14,-2) [label=right:{{$5$}}] {};
    \draw[fill=black] (19,0) circle [radius = 0.15] {};
    \draw[fill=black] (21,0) circle [radius = 0.15] {};
    \draw[fill=black] (23,0) circle [radius = 0.15] {};
    \draw[fill=black] (25,0) circle [radius = 0.15] {};
    \draw[ultra thick] (19,0) -- (25,0);
    \node[] at (19,0) [label=below:{{$2$}}] {};
    \node[] at (21,0) [label=below:{{$3$}}] {};
    \node[] at (23,0) [label=below:{{$6$}}] {};
    \node[] at (25,0) [label=below:{{$8$}}] {};
    \end{tikzpicture}
\end{center}
Each link is contractible, and so $\mathcal C$ is locally good.

Next, we show that $\mathcal C$ is neither open convex nor closed convex.  Let $\mathcal U = \{U_1, U_2, \dots, U_8 \}$ be a realization of $\mathcal C$ in some $\mathbb{R}^d$.  
We must show that some $U_i$ is not open, and some $U_j$ is not closed.

First, $ \{U_1,U_2, U_3, U_4, U_5\}$ is a realization of 
the restriction of $\mathcal C$ to the neurons $\{1,2,3,4,5\}$, which is the code called ``$C4$'' in~\cite[Table 1]{goldrup2014classification} and is non-open-convex~\cite{lienkaemper2017obstructions}.  Thus, at least one of $U_1,U_2, U_3, U_4, U_5$ is not open.

Similarly, $ \{U_2, U_3, U_6, U_7, U_8\}$
realizes $\mathcal C$ restricted to $\{2,3,6,7,8\}$.  After relabeling neurons 
$2,3,6,7,8$ by $1,3,2,4,5$, respectively, this restricted code is the code 
 ``$C10$'' in~\cite[Table 1]{goldrup2014classification}, which is non-closed-convex~\cite[Theorem 4.1]{goldrup2014classification}.
Hence, at least one of $U_2, U_3, U_6, U_7, U_8$ is not closed. 
\end{proof}

\begin{remark} \label{rem:another-code-neither-open-nor-closed}
Another locally good code on 8 neurons that is neither open convex nor closed convex, is the code $\mathcal{C} \cup \{278\}$ from Theorem~\ref{thm:notmonotone}.
Non-closed-convexity is shown in that theorem.  As non-open-convexity, restricting the code to $\{1,2,3,4,5,6\}$ yields (up to permuting neurons) the minimally non-open-convex code in~\cite[Theorem 5.10]{jeffs2019morphisms} (this is the code in Lemma~\ref{lem:positivecodimension}), and restriction preserves convexity.  
\end{remark}

The code in Theorem~\ref{thm:neither-open-nor-closed} is on 8 neurons.  
We want to know whether there is a code with the same properties but on fewer neurons. 
(For instance, to our knowledge, the codes that Jeffs and Novik show are locally good -- in fact, ``locally perfect'' -- but neither open convex nor closed convex, require at least 8 neurons~\cite[\S 9]{Jeffs2018convex}.)

\begin{conj} \label{conj:open-closed}
Every locally good code on at most 7 neurons is open convex or closed convex.
\end{conj}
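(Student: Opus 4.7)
The plan is to reduce the conjecture to a finite case analysis. Using the monotonicity property of open convex codes (Proposition~\ref{prop:monotone}), for each simplicial complex $\Delta$ on at most $7$ vertices it suffices to test open convexity of the unique minimal locally good code with simplicial complex $\Delta$, namely the code $\mathcal{C}_{\min}(\Delta)$ consisting of the maximal faces together with the mandatory codewords of $\Delta$. If $\mathcal{C}_{\min}(\Delta)$ is open convex, then every locally good code with that simplicial complex is automatically open convex. Thus the task reduces to handling only those locally good codes whose simplicial complex has a non-open-convex minimal code, and showing each such code is closed convex.

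For each such ``bad'' simplicial complex $\Delta$, one would then check, for each locally good code $\mathcal{C}$ with $\Delta(\mathcal{C}) = \Delta$, whether $\mathcal{C}$ is closed convex (no monotonicity shortcut is available, by Theorem~\ref{thm:notmonotone}). A key intermediate step is to classify, via computer search, all minimally non-open-convex locally good codes on $\leq 7$ neurons---extending the work in \cite{jeffs2019morphisms, jeffs2019sunflowers}---and verify that each admits a closed convex realization (for instance, the $6$-neuron code $\mathcal{C}_0$ of Lemma~\ref{lem:positivecodimension} is closed convex, by Figure~\ref{fig:C0realization}). Symmetrically, one would classify all minimally non-closed-convex locally good codes on $\leq 7$ neurons and verify that each is open convex (extending the analysis of \cite{goldrup2014classification}, where e.g.\ the $5$-neuron code $C10$ appearing in the proof of Theorem~\ref{thm:neither-open-nor-closed} is non-closed-convex but open convex).

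The heart of the argument, and the main obstacle, is to rule out the ``combined obstruction'' phenomenon: a locally good code $\mathcal{C}$ on $\leq 7$ neurons could in principle have one restriction $\mathcal{C}|_{\tau_1}$ that forces non-open-convexity and a second restriction $\mathcal{C}|_{\tau_2}$ that forces non-closed-convexity, leaving $\mathcal{C}$ itself neither open convex nor closed convex. This is exactly what occurs on $8$ neurons in Theorem~\ref{thm:neither-open-nor-closed} and Remark~\ref{rem:another-code-neither-open-nor-closed}, where $\tau_1$ and $\tau_2$ can be chosen to overlap in only two neurons. The conjecture implicitly asserts that this combined obstruction requires at least $8$ neurons. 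To prove it, one would need to show that on $\leq 7$ neurons any such $\tau_1,\tau_2$ must overlap so substantially that either one of the local obstructions is destroyed upon enlarging to the relevant restriction (so that restriction turns out to be convex of the supposedly obstructed type), or the overlap forces enough global structure to explicitly construct a convex realization of one type. Verifying this combinatorial-geometric claim for every pair drawn from the two classified lists of minimal obstructions is expected to be the most delicate step, and may well require new techniques beyond those developed in this paper.
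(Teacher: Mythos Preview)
The statement you are addressing is a \emph{conjecture}, not a theorem: the paper does not prove it. Immediately after stating Conjecture~\ref{conj:open-closed}, the paper records only that it holds for codes on $\leq 4$ neurons (by~\cite{curto2017makes}) and that most of the $5$-neuron case has been handled by Goldrup and Phillipson~\cite{goldrup2014classification}; the cases of $6$ and $7$ neurons are left entirely open. There is therefore no ``paper's own proof'' against which to compare your attempt.

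What you have written is a reasonable high-level \emph{strategy} for attacking the conjecture, and you yourself concede that the final step ``may well require new techniques beyond those developed in this paper.'' That concession is accurate: this is an outline, not a proof. Several of the intermediate tasks you list are themselves open problems. No classification of minimally non-open-convex (respectively, non-closed-convex) locally good codes on $\leq 7$ neurons is known, and, as the paper warns in the paragraph following Conjecture~\ref{conj:open-closed}, moving downward in the poset of codes can \emph{increase} the number of neurons, so the search space for minimal obstructions is not obviously finite. Furthermore, your ``combined obstruction'' framing via restrictions $\mathcal C|_{\tau_1}$ and $\mathcal C|_{\tau_2}$ does not exhaust the possibilities: a $7$-neuron code could itself be minimally non-open-convex (or non-closed-convex) with no proper restriction witnessing the obstruction, so ruling out overlapping pairs of smaller obstructions would not suffice. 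In short, your proposal sketches a plausible program, but neither you nor the paper supplies a proof.
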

For codes on up to 4 neurons, Conjecture~\ref{conj:open-closed} is true, as such locally good codes are open convex~\cite{curto2017makes}.
For codes on 5 neurons, most of the work toward resolving the conjecture was done by Goldrup and Phillipson~\cite{goldrup2014classification}. 
The only codes left to analyze are those with the same simplicial complex as the code in~\cite[Theorem 3.1]{lienkaemper2017obstructions} (this is the code ``C4'' in the proof of Theorem~\ref{thm:neither-open-nor-closed}). 

Another approach to resolving Conjecture~\ref{conj:open-closed} comes from the fact that the set of all neural codes forms a partially ordered set (poset); for details, see~\cite{jeffs2019morphisms}.  In this poset, the open convex codes form a down-set, that is, all codes lying below an open convex code are also open convex.  Also forming down-sets are closed-convex codes~\cite[Proposition 9.3]{jeffs2019embedding} and locally good codes~\cite[Corollary 4.2]{kunin2020oriented}.  Therefore, it would be interesting to check whether any codes lying below the 8-neuron code in Theorem~\ref{thm:neither-open-nor-closed} are neither open convex nor closed convex (i.e., whether or not this code is minimal among codes that are neither open nor closed convex). If trying this approach, however, one should beware that it is possible for the number of neurons in a code to \emph{increase} while moving downwards in this poset.

%
%
%
%
%
%
%
%

\section{Discussion} \label{sec:discussion}
Open convex and closed convex codes share several important properties.  For instance, both classes of codes are locally good (and, in fact, ``locally perfect''~\cite{jeffs2019sparse}).  Also, max-intersection complete codes are both open convex and closed convex~\cite{cruz2019open}.  However, while open convex codes possess a monotonicity property, which greatly simplifies the analysis of all codes with a given simplicial complex, here we showed that this property fails for closed convex codes (Theorem~\ref{thm:notmonotone}).  Also, even when monotonicity holds, the embedding dimension can greatly increase (Theorem~\ref{thm:finiteincrease}).

Additional results in our work also address fundamental questions pertaining to open convex and closed convex codes.  For instance, there is a locally good code on 8 neurons that is neither open convex nor closed convex (Theorem~\ref{thm:neither-open-nor-closed}). 

Our results lead to several open questions.  
First, is there an instance of non-monotonicity in codes on up to 7 neurons and/or codewords with size up to 5 (Remark~\ref{rmk:closed-cvx-monotone})?  
Next, is 
there a locally good code on 7 neurons that is neither open convex nor closed convex (Conjecture~\ref{conj:open-closed})?  
Also, are there general criteria (beyond local obstructions) for ruling out closed convexity (Remark~\ref{rmk:prove-not-closed-cvx})?  This is an important future direction, as existing approaches are somewhat ad-hoc, and progress here will therefore aid in classifying closed convex codes.

Answers to these questions, together with the results we already have on convex codes, will clarify the theories of open convex and closed convex codes.  
Specifically, we will better understand when the convexity properties are the same (for instance, for nondegenerate codes~\cite{cruz2019open}) and when they differ (as seen in this work).  
In turn, 
this knowledge contributes to answering the questions from neuroscience that originally motivated our work.  Specifically, we will better understand what types of neural codes allow the brain to represent structured environments.

\subsection*{Acknowledgements}
The authors thank Aaron Chen, Chad Giusti, 
Kaitlyn Phillipson, and Thomas Yahl for insightful comments and suggestions, and thank Nida Obatake for editorial suggestions.  The authors are also grateful to four referees for detailed comments which improved this work.
BG, SM, and AS are grateful to Ramona Heuing and
Martina Juhnke-Kubitzke for finding a crucial error in a prior version of this work.
BG and SM initiated this research in the 2019 REU in the Department of Mathematics at Texas A\&M University, supported by the NSF (DMS-1757872).  AS was supported by the NSF (DMS-1752672).  RAJ was supported by the NSF (DGE-1761124).  

\bibliographystyle{unsrt}  
\bibliography{references} 

\end{document}